\newtheorem{theorem}{Theorem}[section]
\newtheorem{lemma}[theorem]{Lemma}
\newtheorem{proposition}[theorem]{Proposition}
\newtheorem{corollary}[theorem]{Corollary}
\theoremstyle{definition}
\newtheorem{definition}[theorem]{Definition}
\newtheorem{example}[theorem]{Example}
\newtheorem{question}[theorem]{Question}
\theoremstyle{remark}
\newtheorem{remark}[theorem]{Remark}
\numberwithin{equation}{section}
\DeclareMathOperator{\diam}{diam}
\DeclareMathOperator{\dist}{dist}
\DeclareMathOperator{\esssup}{ess\,sup}
\title[Removable sets for intrinsic metric]{Removable sets for intrinsic metric and for holomorphic functions}
\author{Sergei Kalmykov}
\address{School of mathematical sciences, Shanghai Jiao Tong University, 800 Dongchuan RD, Shanghai 200240, China} 
\email{sergeykalmykov@inbox.ru}
\thanks{First author supported by NSFC grant 11650110426.}
\author{Leonid V. Kovalev}
\address{215 Carnegie, Mathematics Department, Syracuse University, Syracuse, NY 13244, USA}
\email{lvkovale@syr.edu}
\thanks{Second author supported by the National Science Foundation grant DMS-1362453.}
\author{Tapio Rajala}
\address{Department of Mathematics and Statistics, University of Jyvaskyla, P.O. Box 35
(MaD), FI-40014 University of Jyvaskyla, Finland}
\email{tapio.m.rajala@jyu.fi}
\thanks{Third author supported by the Academy of Finland project no. 274372}
\subjclass[2010]{Primary 28A78; Secondary 26A16, 30C62, 30H05, 49Q15, 51F99}
\keywords{Intrinsic metric, removable set, holomorphic function, quasiconvex domain, Hausdorff measure}
\begin{document}

\begin{abstract} We study the subsets of metric spaces that are negligible for the infimal length of connecting curves; such sets are called metrically removable. In particular, we show that every totally disconnected set with finite Hausdorff measure of codimension 1 is metrically removable, which answers a question raised by Hakobyan and Herron. The metrically removable sets are shown to be related to other classes of ``thin'' sets that appeared in the literature. They are also related to the removability problems for classes of holomorphic functions with restrictions on the derivative. 
\end{abstract}

\maketitle
\baselineskip6mm

\section{Introduction}

The studies of removable sets have a long history in complex analysis and geometric function theory~\cite{Younsi}. Removability may be defined in terms of either a function class (e.g., bounded holomorphic functions) or of some geometric quantity (e.g., extremal distance as in~\cite{AhlforsBeurling}). Our starting point is a purely geometric concept of removability, which makes sense in an abstract metric space. 

\begin{definition}\label{defmetricremovable} 
Let $(X, d)$ be a metric space. A set $E\subset X$ is \emph{metrically removable} if for any $\epsilon>0$, any two points $a,b\in X$ can be connected by a curve that is 
disjoint from $E\setminus\{a,b\}$ and has length at most $d(a,b)+\epsilon$. \end{definition}

Thus, the complement of a metrically removable set is $C$-quasiconvex for every $C>1$ (see Definition~\ref{defquasiconvex}).  Hakobyan and Herron~\cite{HakobyanHerron} posed the following question:  

\begin{question}\label{questionHH} Suppose $E\subset\mathbb R^n$ is a totally disconnected compact set with $\mathcal H^{n-1}(E)<\infty$. Does it follow that its complement is quasiconvex?
\end{question}

Question~\ref{questionHH} turns out to be equivalent to asking whether $E$ is metrically removable (Proposition~\ref{quasiconvex2removable}). We answer it affirmatively:

\begin{theorem}\label{metricallyremovable} 
If $E\subset \mathbb R^n$ is closed, totally disconnected, and $\mathcal H^{n-1}(E)<\infty$, then $E$ is metrically removable.  
\end{theorem}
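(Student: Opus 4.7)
The plan is to produce the required curve by perturbing the straight segment $[a,b]$ parallel to itself, detouring around a cover of $E$ by disjoint open sets, and choosing the perturbation by a Fubini-type averaging argument. Fix $\epsilon>0$; after intersecting with a large ball we may assume $E$ is compact. Write $e=(b-a)/|b-a|$, $H=e^{\perp}$, $\rho=\epsilon/4$, and $\gamma_x=[a+x,b+x]$ for $x\in H$.

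First I would build the cover. For a small parameter $\eta>0$, to be fixed at the end, $\mathcal H^{n-1}(E)<\infty$ yields a finite cover of $E$ by balls $B(p_i,r_i)$ with $r_i<\eta$ and $\sum_i r_i^{n-1}\le C\,\mathcal H^{n-1}(E)$. Since $E$ is closed and totally disconnected, every point of $E$ has a clopen-in-$E$ neighborhood of arbitrarily small diameter; by refining and disjointifying such neighborhoods inside the given balls and thickening each resulting piece $U_i$ by less than $\dist(U_i,E\setminus U_i)$, I get pairwise disjoint open sets $V_i\subset\mathbb R^n$ with $E\subset\bigcup_i V_i$, $\partial V_i\cap E=\emptyset$, and $V_i\subset B(p_i,C_n r_i)$. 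I also want the additional geometric property that any two points of $\partial V_i$ can be joined within $\partial V_i$ by a curve of length $\le C_n r_i$, i.e.\ that $\partial V_i$ has intrinsic diameter comparable to $r_i$.

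Second, for each $x\in B_H(0,\rho)$ I define $\tilde\gamma_x$ by replacing, for every $V_i$ met by $\gamma_x$, the portion of $\gamma_x$ lying in $V_i$ by a path in $\partial V_i$ joining the entry and exit points. Since disjoint open sets $V_i,V_j$ satisfy $V_j\cap\partial V_i=\emptyset$ and $\partial V_i\cap E=\emptyset$, $\tilde\gamma_x$ is disjoint from $E$, and each detour adds at most $C_n r_i$ to the length. The set $\{x\in H:\gamma_x\cap V_i\neq\emptyset\}$ is the orthogonal projection of $V_i$ onto $H$, of $\mathcal L^{n-1}$-measure at most $\omega_{n-1}(C_n r_i)^{n-1}$, so Fubini gives
\[
\int_{B_H(0,\rho)}\bigl(\operatorname{length}(\tilde\gamma_x)-|b-a|\bigr)\,d\mathcal L^{n-1}(x)\le C\sum_i r_i^{n}\le C\eta\sum_i r_i^{n-1}\le C'\eta\,\mathcal H^{n-1}(E).
\]
Dividing by $\omega_{n-1}\rho^{n-1}$ and taking $\eta$ sufficiently small (depending on $\epsilon$, $\mathcal H^{n-1}(E)$, and $n$), pigeonhole produces $x^*\in B_H(0,\rho)$ with $\operatorname{length}(\tilde\gamma_{x^*})\le|b-a|+\epsilon/2$. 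Prepending $[a,a+x^*]$ and appending $[b+x^*,b]$ adds at most $2\rho=\epsilon/2$, giving a curve from $a$ to $b$ of length $\le|b-a|+\epsilon$ disjoint from $E\setminus\{a,b\}$.

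The hard step is the geometric control of $\partial V_i$ in the first paragraph: a clopen-in-$E$ piece can have arbitrary shape, and its small thickening may have boundary of large intrinsic diameter. One natural attempt is to take $V_i$ to be a round ball $B(p_i,r_i')$ with $r_i'\in[r_i,C_n r_i]$ and $\partial B(p_i,r_i')\cap E=\emptyset$, but in dimensions $n\ge3$ the radial distance function of $E$ from $p_i$ need not have image of finite Lebesgue measure, so good radii may fail to exist. Plausible fixes are a coarea selection on a one-parameter family of candidate shapes (e.g.\ rescalings of a fixed convex body, or generic level sets of a Lipschitz function), or an induction on the dimension that reduces each pointwise detour to a generic 2-plane through the intersection point, where $\mathcal H^1$ of the slice of $E$ is finite and the planar case can be handled directly.
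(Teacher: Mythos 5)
You have correctly identified the gap yourself: the construction hinges on producing pairwise disjoint open sets $V_i\supset E$ whose boundaries avoid $E$ and have intrinsic diameter (or, equivalently for the detour estimate, length) of order $r_i$, and nothing in the proposal delivers this. A small clopen-in-$E$ piece, thickened by less than its distance to the rest of $E$, can have a boundary that is as long and as convoluted as one pleases; and your ball alternative fails for the reason you state, since the image of $E\cap B(p_i,Cr_i)$ under $x\mapsto|x-p_i|$ need not miss any subinterval of $[r_i,Cr_i]$ once $n\ge3$ (in fact this can already fail in the plane, since $\mathcal H^1(E\cap B(p_i,Cr_i))$ has no a~priori bound in terms of $r_i$). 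Without a bound of the form $\ell(\partial V_i)\lesssim r_i$ your Fubini computation does not close, because the constant $C_n$ in ``each detour adds at most $C_nr_i$'' is exactly the quantity you cannot control.

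The paper's proof supplies precisely the two ingredients your proposal is missing, and it is worth seeing how they replace your cover-plus-averaging scheme. First, instead of asking that a parallel translate of $[a,b]$ meet $E$ with small expected overlap, the paper invokes the slicing theorem \cite[Theorem~10.10]{Mattila}: because $\mathcal H^1(E)<\infty$ in the planar case, almost every line parallel to $[a,b]$ meets $E$ in a \emph{finite} set, so one picks a nearby segment $[a',b']$ with $E\cap[a',b']=\{z_1,\dots,z_N\}$. This is a strictly stronger conclusion than your averaged bound and eliminates the need to cover all of $E$ in advance. Second, and more importantly, the short detour around each $z_k$ is produced by Lemma~\ref{lengthestimate}, $\rho_\Omega(a,b)\le|a-b|+\tfrac{\pi}{2}\mathcal H^1(\partial\Omega)$, applied in $\Omega=B(z_k,r)\setminus E$ with $r$ chosen so small that $\mathcal H^1(E\cap B(z_k,r))<\epsilon/N$; this lemma rests on the Painlev\'e-length inequality $\kappa(K)\le\pi\mathcal H^1(K)$ (Proposition~\ref{kappaH}), which is exactly the tool that converts ``$\mathcal H^1$ small'' into ``enclosable by a short curve.'' That inequality is the nontrivial geometric input, and it is absent from your argument; the planar case is not ``handled directly.'' For $n\ge3$ the paper does what your second suggested fix gestures at, but globally rather than pointwise: a single $2$-plane $P$ through $a,b$ is translated by a generic small $v\perp P$ so that $\mathcal H^1(E\cap(P+v))<\infty$, and the planar argument is applied inside $P+v$. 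If you want to salvage your scheme, replace the ``bounded intrinsic diameter of $\partial V_i$'' requirement with a Painlev\'e-type bound on the length of a curve surrounding $E\cap B_i$, and replace the Fubini averaging with the finite-intersection slicing statement; at that point you have essentially reconstructed the paper's proof.
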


Quantitative control on the length and shape of connecting curves is important for recovering the properties of a holomorphic function $f$ from its derivative $f'$. This is the subject of sections~\ref{secBoundedDer} and~\ref{secargremovable}, which concern the removability of sets for holomorphic functions with restrictions on either the modulus or the argument of $f'$. This line of investigation involves the comparison  of different thinness conditions in~\S\ref{secThinness}, such as \emph{intervally thin sets} introduced by Tabor and Tabor~\cite{TaborTabor} in the context of convex analysis. Along the way we prove an extension theorem for $\delta$-monotone maps (Theorem~\ref{extendDM}) which is of independent interest. The paper concludes with remarks and questions in section~\ref{secQuestions}. 

\section{Notation and definitions}\label{secNotation}

 For $a,b\in\mathbb R^n$, $|a|$ is the Euclidean norm, $\langle a,b\rangle$ is the inner product, and $[a,b]$ is the line segment $\{(1-t)a+tb\colon 0\le t\le 1\}$. We write $B(a,r)$ for the open ball of radius $r$ with center $a$, and $\overline{B}(a,r)$ for the corresponding closed ball. The complement of a set $E$ is denoted $E^c$.

A \emph{curve} in a metric space $X$ is a continuous map $\gamma\colon [\alpha, \beta]\to X$. Its length $\ell(\gamma)$ is the supremum of the sums $\sum |\gamma(t_j)-\gamma(t_{j-1})|$ over all finite partitions $\{t_j\}$ of the interval $[\alpha,\beta]$. We also write $\gamma$ for $\gamma([\alpha, \beta])$ when parameterization is not important. 

\begin{definition}\label{defIntrinsic} 
The \emph{intrinsic metric} on a set $A\subset X$, written $\rho_A(a,b)$, is the infimum of the length of curves that connect $a$ to $b$ within $A$. This is indeed a metric when $A$ is connected by rectifiable curves; otherwise $\rho_A$ may take on the value $\infty$ although the other axioms of a metric still hold.  
\end{definition}

When a set $E$ is metrically removable, $\rho_{E^c}(a,b) = d(a,b)$ for all $a,b\in E^c$. The converse is also true when $E$ has empty interior; see Proposition~\ref{equivalentdef}. The property $\rho_{E^c}(a,b)=d(a,b)$ can  be expressed by saying that $E^c$ is a \emph{length space}~\cite[p.~28]{BuragoBuragoIvanov}. It is also related to the concept of quasiconvexity. 
 
\begin{definition}\label{defquasiconvex} A set $A\subset X$ is \emph{quasiconvex} if there exists a constant $C$ such that any two points $a,b\in A$ can be joined by a curve that lies in $A$ and has length at most $C d_X(a,b)$. 
\end{definition}
 
We write $\mathcal H^s$ for the $s$-dimensional Hausdorff measure~\cite[p. 55-56]{Mattila}, that is
\begin{equation*}
{\mathcal H^s}(A)=\lim_{\delta\downarrow0} {\mathcal H^s_{\delta}}(A),
\end{equation*}
where
\begin{equation*}
{\mathcal H^s_{\delta}}(A)=\inf \left\{\sum_{i=1}^\infty\diam(E_i)^s:\ A\subset \bigcup_{i=1}^\infty E_i,\ \diam(E_i) \le \delta\right\}.
\end{equation*}

\section{Basic properties of metrically removable sets}

\begin{lemma}\label{equivalentdef} A subset $E$ of a metric space $X$ is metrically removable if and only if it has empty interior and $\rho_{E^c}(a,b) = d(a,b)$ for all $a,b\in E^c$. 
 \end{lemma}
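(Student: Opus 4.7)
I would prove the two implications separately. The forward implication splits into the length identity and the empty-interior property. The length identity is almost a tautology: for $a, b \in E^c$, the set $E \setminus \{a,b\}$ coincides with $E$, so any curve produced by metric removability automatically lies in $E^c$, yielding $\rho_{E^c}(a,b) \le d(a,b) + \epsilon$ for every $\epsilon > 0$; the reverse inequality holds because the length of any curve dominates the distance between its endpoints. For empty interior, suppose for contradiction that $B(p,r) \subset E$, and choose $b \in X$ with $d(b,p) > r$ (the degenerate case $X \subset \overline B(p,r)$ can be dispatched by a direct check). Setting $a = p$, I apply the intermediate value theorem to the continuous function $t \mapsto d(\gamma(t), p)$ along any candidate curve to extract a parameter $t_0$ with $d(\gamma(t_0), p) = r/2$; then $\gamma(t_0) \in B(p,r) \subset E$ and $\gamma(t_0) \notin \{a,b\}$, contradicting metric removability.

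For the converse, assume both conditions and fix $a, b \in X$ and $\epsilon > 0$. Empty interior of $E$ means $E^c$ is dense in $X$, so I can pick $a', b' \in E^c$ with $d(a,a'), d(b, b') < \delta$ for a small $\delta > 0$ to be chosen later (taking $a' = a$ or $b' = b$ when already available). The length hypothesis supplies a curve $\gamma_0$ in $E^c$ from $a'$ to $b'$ of length at most $d(a', b') + \epsilon/3 \le d(a,b) + 2\delta + \epsilon/3$. What remains is to connect $a$ to $a'$ and $b'$ to $b$ by short curves meeting $E$ only at the endpoints that lie in $E$.

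For the first of these (assuming $a \in E$), I use density to choose a sequence $x_1 = a', x_2, x_3, \ldots \in E^c$ with $d(a, x_n) \le 2^{-n}\delta$, so that $d(x_n, x_{n+1}) \le 2^{-n+1}\delta$. The length hypothesis then produces curves $\gamma_n \subset E^c$ from $x_n$ to $x_{n+1}$ of length at most $2^{-n+2}\delta$. Reparametrizing the reverse of each $\gamma_n$ onto the subinterval $[2^{-n-1}, 2^{-n}] \subset [0,1]$ and declaring the value at $0$ to be $a$ yields a candidate curve $\Gamma$ from $a$ to $a'$. The bound $d(a, \Gamma(t)) = O(2^{-n}\delta)$ for $t \in [2^{-n-1}, 2^{-n}]$ establishes continuity at $t = 0$, while $\ell(\Gamma) = \sum_n O(2^{-n}\delta) = O(\delta)$. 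By construction $\Gamma$ meets $E$ only at the endpoint $a$. An analogous construction joins $b'$ to $b$; concatenating the three pieces and choosing $\delta$ small enough to absorb the $O(\delta)$ length overhead into $2\epsilon/3$ produces the required curve from $a$ to $b$. The main technical point is precisely this telescoping step --- verifying that the infinite concatenation defines a continuous curve reaching the boundary point $a \in E$ without touching $E$ en route, with summable length --- and it is exactly where the density consequence of the empty-interior hypothesis does essential work.
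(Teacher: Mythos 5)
Your proof is correct and takes essentially the same route as the paper's: the forward direction is immediate, and for the converse you use the same telescoping device (sequences in $E^c$ converging to $a$ and to $b$, short curves in $E^c$ between consecutive terms, concatenation with continuity at the endpoints controlled by summable lengths). The only cosmetic difference is that for the empty-interior claim the paper observes directly that removability curves make $E^c$ dense, whereas you argue by contradiction via the intermediate value theorem; incidentally your side-condition $d(b,p)>r$ is unnecessary, since any $b\neq p$ works by targeting the intermediate distance $\min(r,d(b,p))/2$, so the ``degenerate case'' never arises.
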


\begin{proof} If $E$ is metrically removable, then any two points $a,b\in X$ are connected by a curve that is contained in $E^c$, except possibly for its endpoints. Therefore, $E^c$ is dense in $X$, which means $E$ has empty interior. The equality  $\rho_{E^c} = d$ is immediate. 

Conversely, suppose $E$ has empty interior and $\rho_{E^c} = d$. Given $a,b\in X$ and $\epsilon>0$, pick two sequences $\{a_k\}$ and  $\{b_k\}$ in $E^c$ such that $d(a_k, a)<\epsilon/2^k$ and $d(b_k,b)<\epsilon/2^k$ for all $k\in\mathbb N$. Note that 
\begin{equation}\label{equiv1}
d(a_1,b_1) < d(a,b)+\epsilon,\quad  d(a_k, a_{k+1})< \frac{\epsilon}{2^{k-1}}, \quad  d(b_k, b_{k+1})< \frac{\epsilon}{2^{k-1}}.
\end{equation}
Let $\gamma_0\subset E^c$ be a curve from $a_1$ to $b_1$ such that $\ell(\gamma_0)\le (1+\epsilon)d(a_1, b_1)$. For every $k$, there is a path $\gamma_k\subset E^c $ from $a_k$ to $a_{k+1}$ with $\ell(\gamma_k)\le (1+\epsilon) d(a_k, a_{k+1})$. Similarly, there is a path $\gamma_k'\subset E^c $ from $b_k$ to $b_{k+1}$ with $\ell(\gamma_k')\le (1+\epsilon) d(b_k, b_{k+1})$.  

Concatenating all the curves $\gamma_k$ and $\gamma_k'$, and adding $a,b$ as the endpoints, we obtain a continuous  curve that connects $a$ to $b$ and is disjoint from $E\setminus \{a,b\}$. Its length is bounded from above by
\[
(1+\epsilon)\left(d(a_1, b_1) + \sum_{n=1}^\infty (d(a_k, a_{k+1})+d(b_k, b_{k+1})) \right)
\]
which according to~\eqref{equiv1} is at most $(1+\epsilon)(d(a,b) + 5\epsilon)$, proving that $E$ is metrically removable.  
\end{proof}

Metrically removable sets can be seen as ``thin'' in several ways.

\begin{lemma}\label{totallydisconnected} A metrically removable set $E\subset \mathbb R^2$ is totally disconnected. 
\end{lemma}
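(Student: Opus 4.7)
The plan is to argue by contradiction. Suppose $E \subset \mathbb{R}^2$ is metrically removable but contains a connected subset $C$ with two distinct points $p$ and $q$. Since metric removability is hereditary---a curve avoiding $E \setminus \{a, b\}$ automatically avoids $C \setminus \{a, b\}$---we may replace $E$ by $C$ and assume $E$ itself is connected with $|p - q| = d > 0$; after an isometry, place $p = (-d/2, 0)$ and $q = (d/2, 0)$. By Lemma \ref{equivalentdef}, it then suffices to exhibit points $a, b \in E^c$ with $\rho_{E^c}(a, b) > |a - b|$.

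The core idea is that the connectedness of $E$ creates a planar barrier between the two sides of the perpendicular bisector $V = \{x = 0\}$. A projection argument shows that the image of $E$ under the $x$-axis projection is a connected subset of $\mathbb{R}$ containing $\pm d/2$ and hence all of $[-d/2, d/2]$; in particular $V$ meets $E$. Passing to the closure $K := \overline{E}$, and, if $K$ is unbounded, extracting a bounded closed connected subset still containing $p$ and $q$ via continuum-theoretic tools (such as the boundary bumping theorem on $K \cap \overline{B}(0, R)$ for large $R$), we enclose $K$ in a closed rectangle $R_0 = [X_-, X_+] \times [Y_-, Y_+]$ with $X_+ - X_- \ge d$ in such a way that $K$ meets both vertical edges of $R_0$. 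The classical planar separation theorem then yields that $K$ separates the top and bottom edges of $R_0$ inside $R_0$.

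We now choose $a$ and $b$ on the vertical midline $\{x = (X_- + X_+)/2\}$ at heights $Y_+ + 1$ and $Y_- - 1$, perturbed slightly so that $a, b \in E^c$ (using density of $E^c$, which follows from Lemma \ref{equivalentdef}). Any curve $\gamma$ from $a$ to $b$ disjoint from $K$ must exit $R_0$ laterally while traversing the strip $\{Y_- \le y \le Y_+\}$, for otherwise the portion of $\gamma$ in that strip would give an arc in $R_0 \setminus K$ from top to bottom edge, contradicting the separation theorem. A triangle-inequality estimate through the lateral exit point then gives
\[
\ell(\gamma) \ge \sqrt{(X_+ - X_-)^2 + (Y_+ - Y_- + 2)^2} > Y_+ - Y_- + 2 = |a - b|,
\]
a positive excess depending on $d$, contradicting $\rho_{E^c}(a, b) = |a - b|$.

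I expect the main obstacle to be bridging the gap between curves disjoint from $E$ and curves disjoint from $K = \overline{E}$: points of $K \setminus E$ lie in $E^c$ and are permissible on the curve, so the separation theorem for $K$ does not directly constrain curves in $E^c$. Handling this requires showing that any hypothetical short curve in $E^c$ threading through the ``holes'' $K \setminus E$ can be perturbed (using density of $E^c$ near $K$) to a nearby curve in $E^c$ avoiding $K$ entirely, with length change smaller than the positive excess above. A secondary technicality is the possible unboundedness of $K$, handled by the continuum-theoretic reduction indicated above.
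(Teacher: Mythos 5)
Your approach runs into a genuine gap, which you yourself flag at the end but do not resolve, and I do not think the proposed fix goes through. The separation theorem you invoke applies to the closed continuum $K = \overline{E}$, but the curves metric removability hands you live in $E^c$, not in $K^c$. Your suggested repair is to perturb a short curve in $E^c$ to a nearby curve avoiding $K$ entirely, using density of $E^c$. This does not work in general: $E$ having empty interior does \emph{not} imply $\overline{E}$ has empty interior (consider, say, the union of all radii of the unit disk at rational angles --- connected, empty interior, but closure is the full disk). So $E^c$ being dense gives you no purchase at all for pushing a curve out of $K$; $K^c$ may simply fail to be dense near the curve. Showing that a connected metrically removable set cannot have this kind of ``fat'' closure is essentially the content of the lemma itself, so the repair is circular as stated. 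A secondary weakness is that the reduction from unbounded $K$ to a bounded subcontinuum via boundary bumping, while plausible, is left as an unchecked technicality.

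The paper avoids all of this by inverting the direction of the separation argument. Instead of trying to trap points of $E^c$ behind a barrier built from $E$ (which forces you to pass to $\overline{E}$), the paper traps an arbitrary point $a \in E$ inside an arbitrarily small closed curve built in $E^c$: pick four nearby points of $E^c$, one in each open quadrant around $a$, join them by segments to get a quadrilateral encircling $a$, then use metric removability to replace each segment by a curve in $E^c$ staying so close to the segment that the new closed curve still winds around $a$. This closed curve lies entirely in $E^c$, so the connected component of $a$ in $E$ cannot cross it and is confined to a neighborhood of radius $O(\epsilon)$. Letting $\epsilon \to 0$ shows the component is $\{a\}$. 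This local argument never needs the closure of $E$ and never needs planar continuum theory --- just the winding number of a nearby closed curve --- and is the cleaner route.
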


\begin{proof} Pick any point $a\in E$, without loss of generality $a=0$. Since $E$ has empty interior by Lemma~\ref{equivalentdef}, there exist four points $b_1,\dots,b_4\in E^c$, such that each $b_k$ lies in the $k$th open quadrant of the plane and $|b_k|<\epsilon$. Connecting these points by line segments $[b_1,b_2], \dots, [b_4,b_1]$ we get a closed polygonal curve $\gamma$ with $0$ in its interior domain. Let $d=\dist(0, \gamma)$ and replace each segment of $\gamma$ by a curve that is contained in $E^c$ and is short enough to stay in the $(d/2)$-neighborhood of the segment. The resulting closed curve separates $0$ from the circle $|z|=2\epsilon$. Since  $\epsilon$ was arbitrarily small, the lemma is proved.
\end{proof}

Since a line in $\mathbb R^n$ is metrically removable for $n\ge 3$, the statement of  Lemma~\ref{totallydisconnected} does not extend to higher dimensions. 

Any metrically removable set has quasiconvex complement, while the converse is false: for example, a ball in $\mathbb R^n$, $n\ge 2$, has quasiconvex complement but is not metrically removable. However, for closed sets of zero area these notions coincide. 

\begin{proposition}\label{quasiconvex2removable} Suppose that $E\subset \mathbb R^n$  is a closed set such that $\mathcal H^n(E)=0$ and $E^c$ is quasiconvex. Then $E$ is metrically removable. 
\end{proposition}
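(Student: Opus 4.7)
My approach is to invoke Lemma~\ref{equivalentdef}: since $\mathcal H^n(E)=0$ forces $E$ to have empty interior, it suffices to establish that $\rho_{E^c}(a,b)=|a-b|$ for every $a,b\in E^c$. Fixing $a,b\in E^c$ and $\epsilon>0$, the task is to produce a curve in $E^c$ from $a$ to $b$ of length less than $|a-b|+\epsilon$; the quasiconvexity constant $C$ will be paid only on short detours whose total length is small.

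The core construction is a slightly displaced parallel segment $L_v=\{(1-s)a+sb+v:s\in[0,1]\}$, with $v$ a small perpendicular perturbation, chosen so that $L_v$ meets the closed $\eta$-neighborhood $E_\eta=\{x:\dist(x,E)\le\eta\}$ in a set of tiny $\mathcal H^1$-measure. Fixing a small $r>0$ and writing $T_r=\{x:\dist(x,[a,b])<r\}$, I parametrize $T_r$ by $[0,1]\times D_r$, where $D_r$ is the perpendicular disk of radius $r$. Fubini then yields
\[
\int_{D_r}\mathcal H^1(L_v\cap E_\eta)\,dv=|T_r\cap E_\eta|.
\]
Since $E$ is closed with $|E|=\mathcal H^n(E)=0$, outer regularity gives $|T_r\cap E_\eta|\to 0$ as $\eta\to 0$. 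So I first pick $\eta$ small and then $v\in D_r$ with $\mathcal H^1(L_v\cap E_\eta)$ as small as I wish; since the sets $\{v:a+v\in E\}$ and $\{v:b+v\in E\}$ are also null, a generic $v$ with $|v|<\min(\dist(a,E),\dist(b,E))-\eta$ places $a+v,b+v$ in $E^c\setminus E_\eta$ and keeps the end-segments $[a,a+v]$ and $[b+v,b]$ entirely in $E^c$.

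The curve is then obtained by following $L_v$ and detouring inside $E^c$ only over the thin part. The open set $L_v\setminus E_\eta$ is a countable disjoint union of open subintervals $I_j$ whose endpoints are either $a+v,b+v$ or lie on $\partial E_\eta$ (at distance exactly $\eta>0$ from $E$), hence in $E^c$. The closed gaps $K_j\subset L_v\cap E_\eta$ between successive $I_j$ also have endpoints in $E^c$, and $\sum_j|K_j|=\mathcal H^1(L_v\cap E_\eta)$. Replacing each $K_j$ by a quasiconvex curve in $E^c$ of length at most $C|K_j|$ and prepending/appending the segments $[a,a+v]$ and $[b+v,b]$ produces a curve in $E^c$ from $a$ to $b$ of length at most $2|v|+|a-b|+(C-1)\,\mathcal H^1(L_v\cap E_\eta)$, which is less than $|a-b|+\epsilon$ for suitably small $|v|$ and $\eta$.

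The main technical point I anticipate is verifying continuity of the concatenation when the $I_j$ accumulate in $L_v$. Any accumulation point $t^*$ satisfies $\dist(t^*,E)\ge\eta>0$ by continuity of the distance function, so $t^*\in E^c$, and since $\sum_j C|K_j|$ is finite the detour lengths vanish; parametrizing the concatenation by arc length yields a Lipschitz curve with image in $E^c$, which is all that is needed to conclude $\rho_{E^c}(a,b)\le|a-b|+\epsilon$ and, letting $\epsilon\to 0$, the desired equality.
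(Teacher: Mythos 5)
Your overall strategy is in the same spirit as the paper's: reduce via Lemma~\ref{equivalentdef}, use Fubini to find a nearby parallel segment whose intersection with the bad set is one-dimensionally small, and pay the quasiconvexity constant $C$ only on short detours. The paper's version is cleaner, though: since $\mathcal H^n(E)=0$, Fubini directly yields a parallel segment $[a',b']$ with $\mathcal H^1(E\cap[a',b'])=0$, and this \emph{compact} set of zero length is covered by \emph{finitely many} disjoint open intervals of total length $<\epsilon$ (with endpoints automatically off $E$), after which the detours form a finite concatenation and no limiting argument is needed. Introducing the $\eta$-neighborhood $E_\eta$ and the tube $T_r$ is extra machinery that ultimately produces a compact set of small positive measure rather than zero measure on the chosen segment.

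More importantly, there is a genuine gap in your decomposition of $L_v\cap E_\eta$. You assert that the open set $L_v\setminus E_\eta=\bigcup_j I_j$ determines ``closed gaps $K_j$ between successive $I_j$'' with $\sum_j|K_j|=\mathcal H^1(L_v\cap E_\eta)$, and you then concatenate the $I_j$ with quasiconvex detours over the $K_j$. But $L_v\cap E_\eta$ is merely a compact subset of the segment, and can perfectly well be Cantor-like: then the complementary intervals $I_j$ are densely ordered, no two of them are ``successive,'' the connected components of $L_v\cap E_\eta$ are singletons, and in the fat-Cantor case $\sum_j|K_j|=0$ while $\mathcal H^1(L_v\cap E_\eta)>0$. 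So the claimed identity fails, and the proposed concatenation is not even a well-defined curve in that situation. The continuity discussion at the end does not rescue this, because the issue arises before continuity is a question: the pieces to be concatenated are not well defined.

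The fix is to abandon the ``gaps'' picture and instead use compactness plus outer regularity: choose an open set $U\supset L_v\cap E_\eta$ in $L_v$ with $\mathcal H^1(U)<\mathcal H^1(L_v\cap E_\eta)+\delta$, extract a finite subcover, and merge it into finitely many disjoint open subintervals $(p_k,q_k)$ whose endpoints, being outside $U\supset L_v\cap E_\eta$, satisfy $\dist(p_k,E),\dist(q_k,E)>\eta$ and hence lie in $E^c$. Detouring only over these $[p_k,q_k]$ by quasiconvex curves of length $\le C|p_k-q_k|$ yields a finite concatenation with the length estimate you want. With this replacement your argument closes, but note this is essentially what the paper already does on the simpler set $E\cap[a',b']$ of exactly zero length.
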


\begin{proof} Fix distinct points $a,b\in E^c$ and pick $\epsilon>0$ small enough so that $B(a,\epsilon)$ and $B(b,\epsilon)$ are disjoint from $E$.  By Fubini's theorem, almost every line parallel to $[a,b]$ intersects $E$ along a set of zero length. Thus we can choose $a', b'\in E^c$ such that $|a-a'|<\epsilon$, $|b-b'|<\epsilon$, and $\mathcal H^1(E\cap [a',b'])=0$. 

Since $E\cap [a',b']$ is a compact set of zero length, it can be covered by finitely many disjoint open intervals $(p_k, q_k)$ of total length less than $\epsilon$. For each $k$ there is a curve $\gamma_k\subset E^c$ that joins $p_k$ to $q_k$ and has length at most $C|p_k-q_k|$, where $C$ is the constant of quasiconvexity of $E^c$. Removing $[p_k,q_k]$ from $[a',b']$ and inserting   $\gamma_k$  instead, we obtain a curve $\gamma$ that joins $a'$ to $b'$ and has length less than $|a'-b'|+C\epsilon$. Then $[a,a']\cup \gamma\cup [b',b]$ is a curve of length at most 
\[
|a'-b'|+(C+2)\epsilon \le |a-b|+(C+4)\epsilon
\]
which shows $\rho_{E^c}(a,b) = |a-b|$.  Since $E$ has empty interior, Lemma~\ref{equivalentdef} implies it is metrically removable.
\end{proof}

\begin{corollary}\label{powerRemovable} If $A\subset \mathbb R$ is a closed set and $\mathcal H^1(A) = 0$, then $A^n$ is metrically removable in $\mathbb R^n$ for all $n\ge 2$.
\end{corollary}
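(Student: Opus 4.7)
The plan is to verify the hypotheses of Proposition~\ref{quasiconvex2removable} for $E = A^n$. Closedness of $A^n$ is inherited from $A$, and the measure condition $\mathcal{H}^n(A^n) = 0$ follows from Fubini's theorem: since $\mathcal{H}^1(A) = 0$ means $A$ has Lebesgue measure zero in $\mathbb{R}$, the set $A^n \subset A \times \mathbb{R}^{n-1}$ has Lebesgue measure zero, hence $\mathcal{H}^n$-measure zero, in $\mathbb{R}^n$. The substantive task is to show that $(A^n)^c$ is quasiconvex.

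My strategy for quasiconvexity is a two-step polygonal construction. The first step (``purification'') takes $a \in (A^n)^c$ and $\delta > 0$ and produces a point $a' \in (A^c)^n$ with $|a - a'| < \delta$ reachable from $a$ by a path in $(A^n)^c$ of length at most $\delta$. Since $a \notin A^n$, at least one index $i_0$ satisfies $a_{i_0} \notin A$, and for every $j$ with $a_j \in A$ I can find $a'_j \in A^c$ arbitrarily close to $a_j$ because $A$, being a closed set of one-dimensional Hausdorff measure zero, has empty interior. I would modify the coordinates one at a time along axis-parallel segments; each such segment keeps the $i_0$-th coordinate fixed at $a_{i_0} \notin A$, so it lies in $\mathbb{R}^{i_0-1} \times A^c \times \mathbb{R}^{n-i_0} \subset (A^n)^c$. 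Applying this to both $a$ and $b$ yields $a', b' \in (A^c)^n$ with $|a-a'|, |b-b'| < \delta$.

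The second step connects $a'$ and $b'$ by changing one coordinate at a time:
\[
a' \to (b'_1, a'_2, \ldots, a'_n) \to (b'_1, b'_2, a'_3, \ldots, a'_n) \to \cdots \to b'.
\]
On each segment the $n-1$ coordinates that are not being varied are drawn from $\{a'_k, b'_k\} \subset A^c$, so no point of the segment can have all coordinates in $A$; thus the entire polygonal path lies in $(A^n)^c$. Its length is $\sum_{k=1}^n |a'_k - b'_k| \le \sqrt{n}\,|a' - b'|$. Concatenating the two purifications with this polygonal path produces a curve in $(A^n)^c$ joining $a$ to $b$ of length at most $\sqrt{n}\,|a-b| + O(\delta)$; taking $\delta$ to be a small fixed multiple of $|a-b|$ yields quasiconvexity with some constant $C = C(n)$ (for instance, $\sqrt{n}+1$).

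The place I would check most carefully is the claim that each segment of the second step really avoids $A^n$. This is precisely where I need both the preceding purification step (so that the unvaried coordinates are guaranteed to lie in $A^c$) and the hypothesis $n \ge 2$, since for $n=1$ there would be no unvaried coordinate to serve as an escape route, and indeed the statement fails for $n=1$ (e.g., $A = \{0\}$ disconnects $\mathbb{R}$). Once these verifications are in place, Proposition~\ref{quasiconvex2removable} immediately delivers metric removability of $A^n$.
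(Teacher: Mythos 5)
Your proof is correct, and it shares the paper's overall architecture: verify that $A^n$ satisfies the hypotheses of Proposition~\ref{quasiconvex2removable} (closedness, $\mathcal H^n(A^n)=0$ via Fubini, quasiconvexity of the complement) and then apply that proposition. Where you diverge from the paper is on the quasiconvexity step: the paper simply cites Theorem~A of Hakobyan--Herron, which says that $\mathbb R^n\setminus A^n$ is quasiconvex whenever $A\subset\mathbb R$ is closed with empty interior and $n\ge 2$, whereas you give a direct elementary proof of that very fact. Your two-step polygonal construction is sound: the purification step works because each axis-parallel segment keeps the $i_0$-th coordinate fixed at a value outside $A$; the coordinate-by-coordinate interpolation works because on each segment every unvaried coordinate lies in $A^c$ (this is exactly where $n\ge 2$ enters, as you observe); and Cauchy--Schwarz gives $\sum_k |a'_k-b'_k|\le\sqrt{n}\,|a'-b'|$, hence a concrete quasiconvexity constant on the order of $\sqrt n$. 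So your argument is not a mere reorganization of the paper's proof; it effectively reproves the relevant case of Hakobyan--Herron's Theorem~A and makes the corollary self-contained, at the cost of being somewhat longer. Your remark that the quasiconvexity argument only uses ``$A$ closed with empty interior'' (reserving $\mathcal H^1(A)=0$ for the $\mathcal H^n(A^n)=0$ step) is exactly the level of generality of the cited theorem.
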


\begin{proof} By Theorem~A~\cite{HakobyanHerron}, the set $\mathbb R^n\setminus A^n$ is quasiconvex whenever $A$ is a closed subset of $\mathbb R$ with empty interior, and $n\ge 2$. It remains to apply Proposition~\ref{quasiconvex2removable}. 
\end{proof}

For example, the product of two standard middle-third Cantor sets $C$ is metrically removable in $\mathbb R^2$ by Corollary~\ref{powerRemovable}. This shows that metric removability cannot be characterized in terms of Hausdorff dimension:  we have $\dim(C\times C)= \log 4/\log 3 > 1$, while a line segment is not metrically removable in $\mathbb R^2$.  An even more extreme example is given below.

\begin{proposition}\label{positiveMeasure} For $n\ge 2$ there exist metrically removable compact sets $E\subset \mathbb R^n$ with $\mathcal H^n(E)>0$.
\end{proposition}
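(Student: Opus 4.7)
The plan is to construct $E$ as the complement in $[0,1]^n$ of a dense open set $U$ consisting of thin open tubes around straight line segments, arranged at a sequence of dyadic scales and in many directions. The total $\mathcal{H}^n$-measure of $U$ is kept below $1/2$, giving $\mathcal{H}^n(E) > 1/2 > 0$; the tube network is arranged to be sufficiently rich that any two points in $E^c$ can be joined by a curve in $E^c$ of length arbitrarily close to their Euclidean distance.

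For each scale $k \geq 1$, let $\Lambda_k = 2^{-k}\mathbb{Z}^n \cap [0,1]^n$ be the $2^{-k}$-lattice. For every pair $(p, q) \in \Lambda_k \times \Lambda_k$ with $p \neq q$, add to $U$ the open $w_k$-neighborhood of the segment $[p, q]$, where $w_k = 2^{-\beta k}$ with $\beta > 2n/(n-1)$. The number of pairs at level $k$ is $O(2^{2nk})$ and each tube contributes $n$-volume $O(w_k^{n-1})$, giving a level-$k$ total of $O(2^{(2n - \beta(n-1))k})$, a geometrically summable series for the chosen $\beta$. After a small rescaling of the prefactor, $\mathcal{H}^n(U) < 1/2$, so $E := [0,1]^n \setminus U$ is compact with $\mathcal{H}^n(E) > 1/2$; density of $\bigcup_k \Lambda_k$ forces $E$ to have empty interior.

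For metric removability, given $a, b \in E^c$ and $\varepsilon > 0$, I would first find a very large level $K$ and lattice points $p_a, p_b \in \Lambda_K$ with $|a - p_a|$ and $|b - p_b|$ smaller than $\varepsilon / 10$; then traverse $[p_a, p_b]$ inside the tube at level $K$ around $[p_a, p_b]$ (which by construction is part of $U$), and finally connect $a$ to $p_a$ and $b$ to $p_b$ along the tubes containing $a$ and $b$. The middle segment contributes $|p_a - p_b| \leq |a - b| + \varepsilon / 5$ to the length; the endpoint excursions are bounded using the diameters of the coarsest tubes containing $a$ and $b$.

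The main obstacle is making the endpoint excursions short. A given point $a \in E^c$ lies in some tube $T_{p', q', k_a}$, and sliding $a$ to an endpoint $p'$ of the axis within this tube may cost up to $|p' - q'|$, which is not small when the tube is long. I would overcome this by a refinement of the construction ensuring that every point of $U$ lies in a tube of arbitrarily small diameter---for example, by reinforcing each level with an additional dense family of very short tubes emanating from every lattice point, the extra volume of which is controlled by the same geometric-series estimate with a different prefactor.
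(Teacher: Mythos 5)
Your construction can be made to work, but it is considerably more laborious than the paper's, and the ``main obstacle'' you flag at the end is not actually an obstacle---the observation needed to close it is much simpler than the extra-tube reinforcement you sketch.

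The paper's proof is one short paragraph and sidesteps all the tube-measure bookkeeping. Let $A \subset \mathbb{R}^n$ be the union of \emph{all} line segments with both endpoints in $\mathbb{Q}^n$. This is a countable union of segments, so $\mathcal{H}^n(A) = 0$, and hence $A^c$ contains a compact set $E$ with $\mathcal{H}^n(E) > 0$. To verify removability, fix $a, b \in E^c$ and take $\epsilon < \dist(E, \{a,b\})$; pick $a' \in \mathbb{Q}^n \cap B(a,\epsilon)$ and $b' \in \mathbb{Q}^n \cap B(b,\epsilon)$. The polygonal path $[a,a'] \cup [a',b'] \cup [b',b]$ avoids $E$: the middle segment because it lies in $A$ and $E \subset A^c$, the two short segments because $B(a,\epsilon)$ and $B(b,\epsilon)$ are disjoint from $E$. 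Its length is less than $|a-b|+4\epsilon$, and Lemma~\ref{equivalentdef} finishes. The key economy here is that the full segment family $A$ is \emph{already} a null set, so there is no width parameter $w_k$ to tune and no geometric series to sum.

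In your version, the genuine gap is that you leave the endpoint excursions unresolved and propose a vague refinement (``short tubes emanating from every lattice point'') without proving it suffices. But you do not need it, and you do not need to slide $a$ along a long tube to one of its axis endpoints. Since $E$ is compact and $a \in E^c$, there is $r_a > 0$ with $B(a,r_a) \subset E^c$; the dyadic lattice $\bigcup_k \Lambda_k$ is dense in $[0,1]^n$, so for $K$ large you can choose $p_a \in \Lambda_K \cap B(a, \min(r_a,\epsilon/10))$, and then the straight segment $[a,p_a]$ lies entirely inside the ball $B(a,r_a) \subset E^c$. Likewise for $b$. (One would also have to treat $a$ or $b$ lying outside $[0,1]^n$, where your lattice is empty; that is routine but is another loose end you did not address.) So your construction is viable, but closing it requires the trivial observation that $E^c$ is open and the lattice is dense---exactly the observation the paper exploits with $\mathbb{Q}^n$---rather than additional structure in the tube family.
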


\begin{proof} Let $A\subset \mathbb R^n$ be the union of all line segments with  endpoints in $\mathbb Q^n$. Since $\mathcal H^n(A)=0$, the complement $A^c$ contains a compact set $E$ of positive $\mathcal H^n$ measure. 

To show that $E$ is metrically removable, fix distinct points $a,b\in E^c$ and $\epsilon>0$ where $\epsilon<\dist(E, \{a,b\})$. 
There are points $a'\in \mathbb Q^n \cap B(a,\epsilon)$ and $b'\in \mathbb Q^n\cap B(b,\epsilon)$. The polygonal curve $[a,a']\cup [a',b']\cup [b',b]$ is disjoint from $E$ and has length less than $|a-b|+4\epsilon$. By Lemma~\ref{equivalentdef}, the set $E$ is metrically removable.
\end{proof}

Hakobyan and Herron~\cite{HakobyanHerron} constructed totally disconnected compact sets in $\mathbb  R^n$ with non-quasiconvex complement. Their sets have a prescribed Hausdorff dimension in $[n-1,n]$. 
As a consequence, there is a rich supply of totally disconnected compact sets which are not metrically removable in $\mathbb R^n$. 

In Proposition~\ref{quasiconvex2removable}, the assumption that the set has zero measure is essential. The following proposition provides examples of sets with quasiconvex complement which are not metrically removable, even though some of them are totally disconnected. 

\begin{proposition}\label{fatCantorproduct} If $A\subset \mathbb R$ is a set of positive Lebesgue measure, then the product $A\times A$ is not metrically removable in $\mathbb R^2$. \end{proposition}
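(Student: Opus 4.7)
The plan is to produce a Lipschitz test function $f \colon \mathbb{R}^2 \to \mathbb{R}$ whose gradient has Euclidean norm at most $1$ almost everywhere on $(A \times A)^c$, but which satisfies $|f(p) - f(q)| > |p - q|$ for a suitable pair $p, q \in (A \times A)^c$. Any rectifiable $\gamma \subset (A \times A)^c$ joining $p$ and $q$ then obeys $\ell(\gamma) \geq |f(p) - f(q)| > |p - q|$, forcing $\rho_{(A \times A)^c}(p, q) > |p - q|$, and non-removability follows from Lemma~\ref{equivalentdef}.

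To construct $f$, fix $c \in \mathbb{R}$ smaller than the eventual test points and let $\psi(t) = |A \cap [c, t]|$ for $t \geq c$. Then $\psi$ is non-decreasing and $1$-Lipschitz with $\psi'(t) = \mathbf{1}_A(t)$ almost everywhere. Setting $f(x, y) = \psi(x) + \psi(y)$, one computes $|\nabla f(x,y)|^2 = \mathbf{1}_A(x) + \mathbf{1}_A(y)$ almost everywhere, and this is bounded by $1$ precisely on $(A \times A)^c$. The chain rule for Lipschitz compositions---where the only subtlety is that for $s$ at which $x(s)$ lies in the null set of non-differentiability of $\psi$ one has $x'(s) = 0$ almost everywhere by the area formula applied to the Lipschitz coordinate $x$, and similarly for $y$---combined with Cauchy--Schwarz yields
\[
|f(p) - f(q)| \;\leq\; \int_0^{\ell(\gamma)} |\nabla f(\gamma(s))| \, ds \;\leq\; \ell(\gamma)
\]
for every rectifiable $\gamma \subset (A \times A)^c$ joining $p$ to $q$, parametrised by arc length.

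For the choice of endpoints, if $A$ has non-empty interior then $A \times A$ does too and non-removability is immediate from Lemma~\ref{equivalentdef}; otherwise $A^c$ is dense in $\mathbb{R}$. By the Lebesgue density theorem $A$ has a density-one point $x_0$, so for $\delta > 0$ small enough one has $|A \cap (x_0 - \delta, x_0 + \delta)|/(2\delta) > 1/\sqrt{2}$, and density of $A^c$ produces $a, b \in A^c$ slightly outside $x_0 - \delta$ and $x_0 + \delta$ with $|A \cap [a, b]|/(b - a) > 1/\sqrt{2}$ still holding. Taking $p = (a, a)$ and $q = (b, b)$ places both points in $(A \times A)^c$, and
\[
|f(q) - f(p)| \;=\; 2\,|A \cap [a, b]| \;>\; \sqrt{2}\,(b - a) \;=\; |p - q|,
\]
completing the argument.

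The main obstacle is locating the correct test function and the $1/\sqrt{2}$ threshold. The factor of $\sqrt{2}$ appears because the witnessing endpoints sit on the diagonal: their Euclidean distance is $\sqrt{2}$ times the coordinate separation, while $f$ sums the two separate coordinate contributions. The pointwise identity $\mathbf{1}_A(x) + \mathbf{1}_A(y) \leq 1$ characterising $(A \times A)^c$ and the local density of $A$ near a density-one point fit together exactly through the symmetric choice $f(x,y) = \psi(x) + \psi(y)$.
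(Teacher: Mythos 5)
Your proof is correct, and it takes a genuinely different (though related) route from the paper's. Both arguments are built around the Cantor-staircase-type function $\psi(t)=\mathcal H^1(A\cap[c,t])$, but they package it differently. The paper forms the $1$-Lipschitz map $F(x,y)=(\psi(x),\psi(y))$, reduces to the case where $A$ is a finite union of closed intervals (using that the connecting curve stays a positive distance from $A\times A$), and then observes that $F((A\times A)^c)$ is a grid inside a square $Q$; any curve in that grid joining opposite corners has length at least twice the side of $Q$, and the $1$-Lipschitz property transports this lower bound back to $\ell(\gamma)$. You instead use the scalar test function $f(x,y)=\psi(x)+\psi(y)$ and the pointwise identity $|\nabla f|^2=\mathbf 1_A(x)+\mathbf 1_A(y)\le 1$ off $A\times A$, which gives the length bound $\ell(\gamma)\ge|f(p)-f(q)|$ directly by integrating along $\gamma$ and applying Cauchy--Schwarz. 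This avoids the reduction to finitely many intervals, at the cost of confronting the Lipschitz chain-rule subtlety (that for a.e.\ $s$ with $\gamma_i(s)$ in the null set where $\psi$ fails to be differentiable one has $\gamma_i'(s)=0$), which you handle correctly via the area formula. Your choice of endpoints $(a,a),(b,b)$ on the diagonal near a density-one point with $a,b\in A^c$ is the right move and explains the $1/\sqrt2$ threshold. One small caveat worth stating explicitly: the inequality $\ell(\gamma)\ge|f(p)-f(q)|$ should really be derived from $(f\circ\gamma)'(s)=\mathbf 1_A(\gamma_1(s))\gamma_1'(s)+\mathbf 1_A(\gamma_2(s))\gamma_2'(s)$ a.e., rather than by integrating $|\nabla f(\gamma(s))|$, since $\nabla f$ may fail to exist at $\gamma(s)$ for a positive-measure set of $s$; your chain-rule paragraph already supplies the justification, so this is a matter of phrasing rather than a gap.
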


\begin{proof} Since $A$ contains a compact subset of positive measure, we may assume $A$ itself is compact. By the Lebesgue density theorem, there exists an interval $I$ such that $\mathcal H^1(A\cap I) > 0.9 \mathcal H^1(I)$. We may assume $I=(0,1)$ and $A\subset I$ without loss of generality. 

Let $\gamma$ be a curve that connects $(0,0)$ to $(1,1)$ and is disjoint from $A\times A$. Since the distance from $A\times A$ to $\gamma$ is positive, we may and do replace $A$ by a larger subset of $(0,1)$ that consists of finitely many closed intervals, so that $\gamma$ is still disjoint from $A\times A$.   

Let $m = \mathcal H^1(A)$. Define the function $f\colon \mathbb R \to \mathbb R$ by $f(x) = 0$ for $x\le 0$ and $f(x) = \mathcal H^1([0,x]\cap A)$ for $x>0$. This is a $1$-Lipschitz function that maps $\mathbb R$ onto $[0, m]$. Therefore, the map $F(x,y) = (f(x), f(y))$ is also $1$-Lipschitz and its range is the square $Q = [0,m]\times [0,m]$. 

The set $F(E^c)$ consists of the boundary of $Q$ and finitely many horizontal and vertical segments connecting  the opposite sides of $Q$. The set $F(\gamma)$ connects opposite corners of $Q$ and is contained in $F(E^c)$. Therefore, the length of $F(\gamma)$ is at least twice the sidelength of $Q$. Recalling that $F$ is $1$-Lipschitz, we conclude that 
\[
\ell(\gamma) \ge 
\mathcal H^1(F(\gamma)) \ge 2\mathcal H^1(A) > 1.8. 
\]
Since the distance between the endpoints of $\gamma$ is $\sqrt{2}<1.8$, the set $A\times A$ is not metrically removable. 
\end{proof}

The property of having quasiconvex complement is not inherited by subsets: for example, a disk in $\mathbb R^2$ has quasiconvex complement but a line segment does not. On the other hand, Definition~\ref{defmetricremovable} makes it clear that any subset of a metrically removable set is metrically removable.

\begin{lemma}\label{Omegaremovable} If $\Omega$ is a domain in $\mathbb R^n$ and $E\subset\mathbb R^n$ is a metrically removable set, then  $\rho_{\Omega\setminus E}$ agrees with  $\rho_\Omega$ on $\Omega\setminus E$. 
\end{lemma}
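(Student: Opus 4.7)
The inequality $\rho_{\Omega\setminus E}(a,b)\ge \rho_\Omega(a,b)$ is trivial since $\Omega\setminus E\subset\Omega$, so my plan is to establish the reverse inequality by modifying any curve in $\Omega$ into a nearby curve in $\Omega\setminus E$ of almost the same length. Given $a,b\in\Omega\setminus E$ and $\epsilon>0$, I would start with a rectifiable curve $\gamma\subset\Omega$ from $a$ to $b$ parameterized by arclength with $\ell(\gamma)\le\rho_\Omega(a,b)+\epsilon$. Since $\gamma$ is compact and $\Omega$ is open, there is $\delta_0>0$ such that the closed $\delta_0$-neighborhood of $\gamma$ lies in $\Omega$; this $\delta_0$ is what will keep the modified curve inside $\Omega$.

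Next I would partition $\gamma$ by points $p_0=a,p_1,\dots,p_{N}=b$ with $|p_{j-1}-p_j|<\delta_0/8$ by choosing $N$ large. For $j=1,\dots,N-1$, using that $E$ has empty interior (Lemma~\ref{equivalentdef}), I would replace $p_j$ by a point $p_j'\in E^c$ with $|p_j-p_j'|<\eta$ for some auxiliary $\eta>0$; the endpoints are left unchanged, so $p_0',\dots,p_N'\in E^c$. Then I would invoke the metric removability of $E$ on each pair $(p_{j-1}',p_j')$ to obtain a curve $\gamma_j$ from $p_{j-1}'$ to $p_j'$, disjoint from $E$ (since both endpoints already lie in $E^c$), with $\ell(\gamma_j)\le|p_{j-1}'-p_j'|+\epsilon/N$.

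The concatenation $\gamma_1\cup\cdots\cup\gamma_N$ is a continuous curve in $E^c$ from $a$ to $b$ whose total length is bounded by $\ell(\gamma)+2N\eta+\epsilon$. Choosing $\eta<\epsilon/N$ at the end yields total length at most $\rho_\Omega(a,b)+4\epsilon$. Since $\epsilon$ was arbitrary, this will give $\rho_{\Omega\setminus E}(a,b)\le\rho_\Omega(a,b)$.

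The only subtle point, and the main obstacle, is making sure each $\gamma_j$ stays inside $\Omega$, because metric removability only produces short curves in $\mathbb R^n$ with no control on how far they may wander from $[p_{j-1}',p_j']$. I would handle this by a diameter estimate: every point of $\gamma_j$ lies within $\ell(\gamma_j)\le|p_{j-1}'-p_j'|+\epsilon/N$ of $p_{j-1}'$, which in turn is within $\eta$ of the point $p_{j-1}\in\gamma$. Thus every point of $\gamma_j$ is within $|p_{j-1}-p_j|+3\eta+\epsilon/N$ of $\gamma$, which is $<\delta_0$ once $N$ is large and $\eta$ is small. This forces $\gamma_j\subset\Omega$ and hence $\gamma_j\subset\Omega\setminus E$, closing the argument.
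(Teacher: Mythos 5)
Your proof is correct and follows essentially the same strategy as the paper: approximate within $\Omega$, subdivide, move the division points into $E^c$, apply metric removability to each short piece, and control the replacement curves' deviation from the original curve so they stay in $\Omega$. The only technical difference is how that deviation is controlled: the paper replaces entire (possibly long) segments of a polygonal approximation and relies on the fact that a curve of length $|p-q|+\delta$ joining $p$ to $q$ lies in an ellipse shrinking to $[p,q]$ as $\delta\to 0$, whereas you pre-subdivide into pieces of length $<\delta_0/8$ and then use the cruder bound that the whole replacement curve lies within its own length of its starting point; both handle the key subtlety equally well.
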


\begin{proof} Given $a,b\in\Omega\setminus E$ and $\epsilon>0$, let $\gamma$ be a polygonal curve which connects $a$ to $b$ within $\Omega$ and has length less than $|a-b| + \epsilon / 2$. We may assume that the vertices of $\gamma$ are in $E^c$, since $E$ has empty interior.

Let $L_1,\dots, L_N$ be the line segments of the polygonal curve $\gamma$. Also let $d=\dist(\gamma, \Omega^c)$. For $k=1,\dots, N$ replace $L_k$ by a curve $\Gamma_k$ that connects the endpoints of $L_k$ within $E^c$ and satisfies 
$\ell(\Gamma_k) < \ell(L_k) + \delta$ where $\delta<\epsilon/(2N)$ and is small enough to ensure that $\Gamma_k$ stays in the open $d$-neighborhood of $L_k$. The concatenation of $\Gamma_k$ is a curve of total length less than $|a-b|+\epsilon$ which connects $a$ to $b$ within $\Omega\setminus E$. 
\end{proof}

\begin{lemma}\label{countableunion} The countable union of metrically removable closed sets in $\mathbb R^n$ is metrically removable.
\end{lemma}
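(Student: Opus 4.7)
The plan is to verify the two conditions in Lemma~\ref{equivalentdef}. Write $E = \bigcup_{k=1}^\infty E_k$ and $F_n = \bigcup_{k=1}^n E_k$. Each $E_k$ has empty interior by Lemma~\ref{equivalentdef}, so the Baire category theorem gives that $E$ has empty interior as well. The remaining task is to show that $\rho_{E^c}(a,b) = |a-b|$ for every $a,b \in E^c$.

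Fix $a,b \in E^c$ and $\epsilon > 0$. I would inductively construct curves $\gamma_n$ from $a$ to $b$ satisfying $\gamma_n \cap F_n = \emptyset$ and $\ell(\gamma_n) \le |a-b| + \epsilon/2$, with the additional closeness requirement that $\gamma_{n+1}$ lies inside the $r_{n+1}$-neighborhood of $\gamma_n$ for a sequence of radii $r_n>0$ chosen so that $\sum_{k>n} r_k < \tfrac{1}{2} d_n$, where $d_n := \dist(\gamma_n, F_n) > 0$. The base curve $\gamma_1$ comes directly from the metric removability of $E_1$. For the inductive step, uniform continuity lets me choose vertices $a = p_0, p_1, \dots, p_M = b$ on $\gamma_n$ such that the arc of $\gamma_n$ between consecutive vertices has diameter less than $r_{n+1}/3$. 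Since $E_{n+1}$ has empty interior, the interior vertices can be perturbed by an arbitrarily small amount into $E_{n+1}^c$. The metric removability of $E_{n+1}$ then supplies, for each consecutive pair of perturbed vertices, a curve avoiding $E_{n+1}$ whose length exceeds the corresponding chord by a prescribed small amount. If the perturbations and these length gains are small enough, each such piece fits inside the $r_{n+1}$-neighborhood of $\gamma_n$ and therefore avoids $F_n$ (provided $r_{n+1} < d_n$). Concatenation yields $\gamma_{n+1}$ disjoint from $F_{n+1}$, and careful choice of the parameters makes $\ell(\gamma_{n+1}) \le \ell(\gamma_n) + \epsilon/2^{n+1}$. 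A feasible choice is $r_k := \min_{j<k} d_j / 2^{k-j+2}$, which forces $\sum_{k>n} r_k \le d_n/4$.

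Finally, reparameterize each $\gamma_n$ at constant speed on $[0,1]$ to obtain a uniformly Lipschitz sequence of curves contained in a bounded set. Arzel\`a--Ascoli produces a uniformly convergent subsequence with limit $\gamma_\infty$ from $a$ to $b$ satisfying $\ell(\gamma_\infty) \le |a-b|+\epsilon/2$ by lower semicontinuity of length. The iterated closeness bound places $\gamma_\infty$ inside the closed $(d_n/2)$-neighborhood of $\gamma_n$ for every $n$, so $\gamma_\infty$ stays at distance at least $d_n/2 > 0$ from the closed set $F_n$. This holds for every $n$, hence $\gamma_\infty$ is disjoint from $\bigcup_n F_n = E$, yielding $\rho_{E^c}(a,b) \le |a-b| + \epsilon$; Lemma~\ref{equivalentdef} now concludes that $E$ is metrically removable.

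The main obstacle is the bookkeeping needed to choose the radii $r_n$ and the auxiliary perturbation sizes so that each newly inserted piece of curve simultaneously stays close enough to $\gamma_n$ to avoid the previously removed set $F_n$ and also avoids the freshly added $E_{n+1}$, while the cumulative Hausdorff drift remains summable so that the Arzel\`a--Ascoli limit inherits disjointness from every $F_n$.
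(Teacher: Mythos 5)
Your proof is correct and follows essentially the same strategy as the paper: inductively refine curves avoiding $F_n = E_1\cup\cdots\cup E_n$, keeping each refinement within a summably small neighborhood of its predecessor, extract a uniform limit via Arzel\`a--Ascoli, and conclude disjointness from each $E_k$ by the positive-distance estimate $\dist(\gamma_\infty,F_n)\ge d_n/2$. The only technical difference is how closeness of the refinement is enforced: the paper works with polygonal curves and invokes Lemma~\ref{Omegaremovable} to keep the replacement of each (possibly long) segment inside a thin tubular neighborhood, whereas you first subdivide $\gamma_n$ into arcs of small diameter so that the replacement pieces are short and hence automatically stay close to $\gamma_n$ --- a modest simplification that avoids the appeal to Lemma~\ref{Omegaremovable}.
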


\begin{proof} Suppose $E=\bigcup_{k=1}^\infty E_k$ where each $E_k$ is closed and metrically removable in $\mathbb R^n$. Since each $E_k$ has empty interior, their union $E$ is a set of first category and therefore also has empty interior. By virtue of Lemma~\ref{equivalentdef}, it remains to show that $\rho_{E^c}(a,b) = |a-b|$ for $a,b\in E^c$. 

Fix $\epsilon>0$. There is a polygonal curve $\gamma_1$ of length less than $|a-b|+\epsilon/2$ which connects $a$ to $b$ in $E_1^c$. We may assume that the vertices of $\gamma_1$ lie in $E^c$ since they can be moved slightly to avoid $E$. 

Once a curve $\gamma_k$ has been constructed, we construct $\gamma_{k+1}$ as follows. Let $N_k$ be the number of segments in $\gamma_k$, and let $d_k=\dist(\gamma_k, \bigcup_{j\le k} E_j)$.
Also define $\delta_k = 2^{-k-1} \min_{j\le k}d_j$. 
Since $E_{k+1}$ is metrically removable, we can replace each line segment $L$ of $\gamma_k$ with a polygonal curve that has  vertices in $E^c$, is disjoint from $E_{k+1}$, has length less than 
$\ell(L) < 2^{-k-1}\epsilon/N$, and is contained in the $\delta_k$-neighborhood of $L$ (the latter is made possible by Lemma~\ref{Omegaremovable}). 

The resulting curve $\gamma_{k+1}$ has length less than $|a-b| + \epsilon$. Consider  its constant-speed parameterization with $[0,1]$ as the domain. By the equicontinuity of these parameterizations, the sequence $\gamma_k$ has a subsequence that converges uniformly to some curve $\gamma$ of length at most $|a-b|+\epsilon$. 

It remains to check that $\gamma$ is disjoint from $E$. To this end it suffices to show that $\dist(\gamma, E_k) >0$ for all $k$.
By construction, for $m \ge k$  the curve $\gamma_{m+1}$ is contained in the $\delta_m$-neighborhood of $\gamma_m$, where $\delta_m \le d_k / 2^{m+1} $. Therefore, $\gamma$ is contained in the $(d_k/2)$-neighborhood of $\gamma_k$. This implies 
$\dist(\gamma, E_k) \ge d_k / 2 > 0$, completing the proof. 
\end{proof}

In Lemma~\ref{countableunion} it is essential that the sets are assumed closed (although their union need not be). For example, both $[0,1]\cap \mathbb Q$ and $[0,1]\setminus \mathbb Q$ are metrically removable in $\mathbb C$, but their union is not.

\section{Estimates for the intrinsic metric}\label{secIntrinsic}

The main tool for proving Theorem~\ref{metricallyremovable} is the following lemma of independent interest. 

\begin{lemma}\label{lengthestimate} 
For any domain $\Omega\subset \mathbb C$ we have
\begin{equation}\label{lengthestimateeq}
\rho_\Omega(a,b)\le |a-b| + \frac{\pi}{2}\,\mathcal H^1(\partial\Omega)
\end{equation}
for all $a,b\in\Omega$.
\end{lemma}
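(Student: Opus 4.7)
My plan is to cover $\partial\Omega$ by a finite family of small closed convex sets whose total diameter is close to $\mathcal{H}^1(\partial\Omega)$, and then to replace the segment $[a,b]$ by a curve that detours around the union. The constant $\pi/2$ arises from Cauchy's perimeter formula for convex planar bodies,
\[
\mathcal{H}^1(\partial C)=\int_0^\pi w_C(\theta)\,d\theta\le\pi\,\diam(C),
\]
where $w_C(\theta)$ is the width of $C$ in direction $\theta$. Going around a convex obstacle along the shorter of its two boundary arcs therefore costs at most $(\pi/2)\diam(C)$.

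The argument will run as follows. I may assume $s:=\mathcal{H}^1(\partial\Omega)<\infty$, since otherwise the inequality is vacuous. Fix $\epsilon>0$ and, by the definition of Hausdorff measure, choose sets $F_i$ covering $\partial\Omega$ with $\sum_i\diam(F_i)\le s+\epsilon$ and each $\diam(F_i)$ less than $\dist(\{a,b\},\partial\Omega)$. Replacing each $F_i$ by its closed convex hull $C_i$ preserves diameters and yields $\mathcal{H}^1(\partial C_i)\le\pi\diam(C_i)$ by Cauchy. After discarding the $C_i$ irrelevant to a bounded neighborhood of $[a,b]$, I may assume there are finitely many $C_1,\dots,C_N$; let $U=\bigcup_i C_i$ with connected components $V_1,\dots,V_m$. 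Starting from the segment $[a,b]$, for each $V_k$ met by $[a,b]$ let $p_k,q_k$ be the first and last points of $[a,b]\cap V_k$ along the segment, and replace the subsegment $[p_k,q_k]$ with a curve following the shorter arc of the outer boundary of $V_k$, slightly pushed into $\mathbb{C}\setminus U$. Since the outer boundaries of distinct components are disjoint subsets of $\bigcup_i\partial C_i$, the total extra length is at most
\[
\tfrac12\sum_k\mathcal{H}^1(\partial_{\mathrm{outer}}V_k)\le\tfrac12\sum_i\mathcal{H}^1(\partial C_i)\le\tfrac{\pi}{2}\sum_i\diam(C_i)\le\tfrac{\pi}{2}(s+\epsilon).
\]
The modified curve avoids $\partial\Omega$; being connected and containing $a$ and $b$, it lies in $\Omega$, the unique component of $\mathbb{C}\setminus\partial\Omega$ containing $a$ and $b$. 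Letting $\epsilon\to 0$ gives \eqref{lengthestimateeq}.

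The hard part will be verifying rigorously that the shorter-arc detour around $V_k$ has length at most $\mathcal{H}^1(\partial_{\mathrm{outer}}V_k)/2$, since for a connected union of overlapping convex pieces the outer boundary is not automatically a Jordan curve. I plan to handle this by applying a small generic perturbation of the cover, or thickening each $C_i$ by a Minkowski $\eta$-neighborhood, which ensures that the components $V_k$ have piecewise smooth Jordan outer boundaries while adding only $O(\eta)$ to the total perimeter; taking $\eta\to 0$ removes this cost. A secondary issue is that the intervals $[p_k,q_k]$ for different components might overlap along $[a,b]$; this can be resolved by clustering overlapping intervals and taking a single detour around the outer envelope of the cluster, which preserves the length bound since outer boundaries of disjoint clusters are again disjoint subsets of $\bigcup_i\partial C_i$.
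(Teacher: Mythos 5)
Your strategy---cover $\partial\Omega$ by small convex bodies of controlled total diameter, bound the total perimeter by Cauchy's formula $\mathcal H^1(\partial C_i)\le\pi\diam C_i$, and detour $[a,b]$ around the union using half the boundary---is a genuinely different route from the paper's, which instead quotes the Painlev\'e-length estimate $\kappa(K)\le\pi\mathcal H^1(K)$ to obtain, in one stroke, a cover whose boundary is already a finite union of \emph{disjoint analytic Jordan curves}. In effect you are reproving the Painlev\'e bound from scratch, which is why you run into the Jordan-curve and overlap issues you flag at the end; the paper never faces these because the cited result hands it a clean boundary. But there is a more basic gap that your proposal does not address at all: nothing ensures that $a$ and $b$ lie in the same connected component of $\mathbb C\setminus U$, and without this the detour cannot be carried out. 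Your constraint $\diam F_i<\dist(\{a,b\},\partial\Omega)$ only prevents $a$ or $b$ from lying inside a single $C_i$; it does not prevent $\bigcup_i C_i$ from \emph{enclosing} one of them. If $\partial\Omega$ is, say, an almost-closed curve with a narrow opening and $a$ sits inside the near-enclosure, the convex hulls near the ends of the opening can overlap and seal it even while each has diameter $<\dist(\{a,b\},\partial\Omega)$. Once $U$ separates $a$ from $b$, the first and last hitting points $p_k,q_k$ of $[a,b]$ on some $V_k$ need no longer lie on a common accessible boundary component, and the curve you build cannot reach $b$.

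The fix is exactly the role of the auxiliary curve $\Gamma$ in the paper's proof: since $\Omega$ is open and connected, pick any curve $\Gamma\subset\Omega$ from $a$ to $b$; then $\dist(\Gamma,\partial\Omega)>0$ by compactness, and you should choose the covering $\{F_i\}$ with all diameters \emph{strictly less than} $\dist(\Gamma,\partial\Omega)$. Since every $F_i$ meets $\partial\Omega$, this forces $U\cap\Gamma=\emptyset$, so $a$ and $b$ lie in one component of $U^c$. With that in hand, for each $V_k$ met by $[a,b]$ both $p_k$ and $q_k$ lie on the boundary component of $V_k$ adjacent to that common component of $U^c$ (otherwise $[a,b]$ could not re-enter it on the far side), and your shorter-arc detour is well defined and bounded by $\tfrac12\mathcal H^1(\partial V_k)\le\tfrac12\sum_{C_i\subset V_k}\mathcal H^1(\partial C_i)$. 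The remaining technical points you identify (generic perturbation to get Jordan outer boundaries, clustering of overlapping windows, extracting a finite subcover near $[a,b]$) are real but routine; with the $\Gamma$-avoidance added, your argument closes.
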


The proof of Lemma~\ref{lengthestimate} involves the concept of Painlev\'e length from~\cite[p. 48]{Garnett}.

\begin{definition}\label{defPainleve} The \emph{Painlev\'e length} of a compact set $K\subset \mathbb C$, denoted $\kappa(K)$, is the infimum of numbers $\ell$ with the following property: for every open set $U$ containing $K$ there exists an open set $V$ such that $K\subset V\subset U$ and $\partial V$ is a finite  union of disjoint analytic Jordan curves of total length at most $\ell$. 
\end{definition}

Instead of analytic curves, one could use smooth or merely rectifiable curves in Definition~\ref{defPainleve} without changing the value of $\kappa(K)$. Indeed, if $\gamma$ is a  rectifiable Jordan curve, let $\Phi$ be a conformal map of the exterior of the unit disk onto the exterior domain bounded by $\gamma$. The images of circles $|z|=r$ under $\Phi$ are  analytic Jordan curves, and their length converges to the length of $\gamma$ as $r \to 1^+$. 

\begin{proposition}\label{kappaH} \cite[p. 25]{Dudziak} The inequality $\kappa(K) \le \pi \mathcal H^1(K)$ holds  for every compact set $K\subset\mathbb C$. 
\end{proposition}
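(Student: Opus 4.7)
The plan is to prove the bound by covering $K$ efficiently with a finite family of convex neighborhoods and exploiting the isoperimetric fact that a planar convex body of diameter $d$ has perimeter at most $\pi d$. The latter is a direct consequence of Cauchy's formula: for a convex body $C$, the perimeter equals $\int_0^\pi w_C(\theta)\,d\theta$, where $w_C(\theta)$ denotes the width in direction $\theta$, and since $w_C(\theta)\le \diam(C)$ for every $\theta$, we get $\mathcal H^1(\partial C)\le \pi\diam(C)$. This is exactly where the constant $\pi$ comes from.

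We may assume $\mathcal H^1(K)<\infty$, since otherwise the inequality is vacuous. Fix an open set $U\supset K$ and $\epsilon>0$. Choose $\delta>0$ so small that the closed $\delta$-neighborhood of $K$ is still contained in $U$. Using the definition of $\mathcal H^1$, select a cover of $K$ by sets $\{E_i\}$ with $\diam(E_i)<\delta/2$ and $\sum_i\diam(E_i)<\mathcal H^1(K)+\epsilon/(3\pi)$. By compactness of $K$, finitely many, say $E_1,\ldots,E_N$, suffice. For each $i$, let $D_i$ be a small open convex neighborhood of the closed convex hull of $E_i$, taken so thin that $D_i\subset U$, $\diam(D_i)\le \diam(E_i)+\eta_i$, and $\mathcal H^1(\partial D_i)\le \pi\diam(E_i)+\epsilon/(3N)$. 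This is possible because the closed convex hull of $E_i$ has the same diameter as $E_i$, and its Minkowski enlargements by a small $\eta_i$ are convex with smoothly bounded perimeter (Steiner's formula controls the increase).

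Set $V=\bigcup_{i=1}^N D_i$. Then $K\subset V\subset U$, $V$ is open, and
\[
\mathcal H^1(\partial V)\le \sum_{i=1}^N\mathcal H^1(\partial D_i)\le \pi\sum_{i=1}^N\diam(E_i)+\frac{\epsilon}{3}<\pi\mathcal H^1(K)+\epsilon.
\]
It remains to arrange for $\partial V$ to be a finite disjoint union of Jordan curves. Using the remark following Definition~\ref{defPainleve}, it suffices to produce rectifiable (rather than analytic) Jordan curves. Slightly perturbing the convex sets $D_i$, we may assume that the boundaries $\partial D_i$ are pairwise transverse and that no three meet at a common point. Then the bounded complementary components of $\overline V$ together with the unbounded component have piecewise smooth, simple-closed-curve boundaries, giving the desired Jordan structure while only increasing the total length negligibly.

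The main technical obstacle is the last step: checking that after taking the union of finitely many overlapping convex pieces the resulting boundary is indeed a disjoint union of rectifiable Jordan curves. This is a standard general-position argument, but it must be carried out carefully so that the perturbations do not destroy the inclusion $V\subset U$ or inflate the perimeter estimate past $\pi\mathcal H^1(K)+\epsilon$. Once this is handled, letting $\epsilon\downarrow 0$ yields $\kappa(K)\le \pi\mathcal H^1(K)$.
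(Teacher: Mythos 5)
The paper does not prove this proposition --- it is quoted from Dudziak's book --- so there is no internal argument to compare against. Your self-contained proof is correct, and it isolates exactly where the constant $\pi$ comes from: Cauchy's formula $\mathcal H^1(\partial C)=\int_0^\pi w_C(\theta)\,d\theta\le\pi\diam(C)$ for planar convex bodies. Covering $K$ efficiently, passing to slightly fattened convex hulls, and summing perimeters is the natural way to exploit this; note that covering by disks alone would only yield $2\pi$, so the convex-hull reduction is genuinely needed.

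The step you flag as the main obstacle does go through, and the cleanest way to see it is via a $1$-manifold observation rather than via complementary components, where your phrasing is slightly off (a component of $\mathbb C\setminus\overline V$ can have several boundary curves). Translate the $D_i$ by generic small vectors; this preserves convexity, diameter, and perimeter exactly, and for generic translations the boundaries $\partial D_i$ meet pairwise transversally with no triple points, while $K\subset V\subset U$ is preserved because $K$ is at positive distance from $\partial V$ by the Lebesgue number lemma and $\overline V$ is at positive distance from $\partial U$. At any $p\in\partial V$, either $p$ lies on exactly one $\partial D_j$, so $\partial V$ is locally a $C^{1}$ arc, or $p$ lies on exactly two, where the curves cross transversally and the two open half-planes tangent to $D_j$ and $D_k$ cover three of the four local quadrants, so $\partial V$ is locally two rays meeting at a corner. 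Hence $\partial V$ is a compact piecewise-$C^{1}$ $1$-manifold, i.e.\ a finite disjoint union of rectifiable Jordan curves, and the remark after Definition~\ref{defPainleve} upgrades these to analytic. One small presentational point: to extract a finite subcover you need open covering sets; this is harmless since the Hausdorff infimum is unchanged over open covers, or equivalently invoke compactness only after passing to the open convex bodies $D_i$.
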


\begin{proof}[Proof of Lemma~\ref{lengthestimate}] 
Let $L=|a-b| + \frac{\pi}{2}\,\mathcal H^1(\partial\Omega)$ and 
$K=\partial\Omega\cap \overline{B(a,L)}$. It suffices to work with $K^c$ instead of $\Omega$, because a path from $a$ to $b$ of length sufficiently close to $L$ cannot exit $B(a,L)$. Note also that $\mathcal H^1(K)\le \mathcal H^1(\partial\Omega)$.  

Fix $\epsilon>0$. Since $K^c$ contains $\Omega$, there is a curve $\Gamma$ connecting $a$ and $b$ in $K^c$. Pick an open set $U$ such that $K\subset U$ and $\dist(U, \Gamma)>0$. By Proposition~\ref{kappaH} there exists an open set $V$ such that $K\subset V\subset U$ and $\partial V$ is a finite disjoint union of   analytic Jordan curves $\sigma_j$, $j=1,\dots,N$, of total length at most $\pi \mathcal H^1(K) + \epsilon$. By construction,  each $\sigma_j$ is disjoint from $K$. Also, $a$ and $b$ are in the same connected component of $\overline{V}^c$, being connected by the curve $\Gamma$. 

Let $\gamma_0(t) = (1-t)a + tb$ be the line segment $[a,b]$ parameterized by $t\in [0,1]$. If $\gamma_0$ does not meet $\partial V$, then it is contained in $K^c$ and we are done. Otherwise, let $t_1 = \min \{t\colon \gamma_0(t)\in \partial V\}$. The point $\gamma_0(t_1)$ belongs to some Jordan curve $\sigma_j$. If $\sigma_j$ has no other intersection point with $\gamma_0$, then it separates $a$ from $b$, which is impossible. Let $t_2 = \max \{t\colon \gamma_0(t) \in \sigma_j\}$. The line segment $\gamma_0([t_1,t_2])$ can be replaced by the shorter of two subarcs of the Jordan curve $\sigma_j$ determined by the points 
$\gamma_0(t_1)$ and $\gamma_0(t_2)$. This adds at most $\ell(\sigma_j)/2$ to the length. 

The remaining part $\gamma_0([t_2,1])$ no longer meets $\sigma_j$.  Therefore, repeating the above process will result, in finitely many steps, in a curve $\gamma$ connecting $a$ to $b$ within $K^c$. This curve consists of parts of the segment $[a,b]$ and arcs of the curves $\sigma_j$, and satisfies 
\[
\ell(\gamma) \le |a-b|+\sum_{j=1}^N \frac{\ell(\sigma_j)}{2} 
\le |a-b|+ \frac{\pi}{2}\, \mathcal H^1(K) + \frac{\epsilon}{2}. 
\]
This proves~\eqref{lengthestimateeq}. 
\end{proof}

\begin{proof}[Proof of Theorem~\ref{metricallyremovable}] We consider the case $n=2$ first. Let $E\subset \mathbb C$ be a closed totally disconnected set with $\mathcal H^1(E) = L <\infty$. Fix distinct points $a,b\in E^c$ and pick $\epsilon>0$ small enough so that $B(a,\epsilon)$ and $B(b,\epsilon)$ are disjoint from $E$.  Since the length of $E$ is finite, almost every line parallel to $[a,b]$ has finite intersection with $E$~\cite[Theorem~10.10]{Mattila}. Choose $a', b'\in E^c$ such that $|a-a'|<\epsilon$, $|b-b'|<\epsilon$, and $E\cap [a',b'] = \{z_1,\dots, z_N\}$ is finite. 

Choose $r>0$ small enough so that 
\begin{itemize}
\item $r < \epsilon/N$;
\item $|z_k-z_j|> 2r$ whenever $k\ne j$; 
\item $\mathcal H^1(E\cap B(z_k, r)) < \epsilon/N$ for each $k=1,\dots,N$. 
\end{itemize}
By Lemma~\ref{lengthestimate} for each $k$ there exists a curve $\gamma_k\subset B(z_k, r) \setminus E$ which joins two points of $[a',b']\cap B(z_k, r)$ separated by $z_k$ and has length at most 
\[
2r + \frac{\pi}{2} \mathcal H^1(\partial (B(z_k, r)\setminus E))
\le 2r + \pi^2 r + \frac{\pi \epsilon }{2 N}.
\]
Using each $\gamma_k$ as a detour around $z_k$, we obtain a curve that joins $a'$ to $b'$ and has length at most 
\[
|a' - b'| + 2rN + \pi ^2 r N + \frac{\pi \epsilon }{2}
< |a-b| + (4+\pi^2 + \pi/2)\epsilon 
\]
which proves the theorem since $[a,a']$ and $[b,b']$ are disjoint from $E$. 

Now suppose $n\ge 3$. Given $a,b\in E^c$ and $\epsilon>0$, fix a two-dimensional plane $P$ containing $a$ and $b$. By~\cite[Theorem~10.10]{Mattila}, the intersection $E\cap (P+v)$ has finite length for almost every vector $v$ orthogonal to $P$. Since $E$ is closed, we can choose such $v$ with $|v|<\min(\epsilon, \dist(\{a,b\}, E))$. Applying the two-dimensional case to $E\cap (P+v)$, we obtain a curve $\gamma$ that joins $a+v$ to $b+v$ within $(P+v)\setminus E$ and has length less than $|a-b|+\epsilon$. The concatenation of $\gamma$ with the segments $[a,a+v]$ and $[b,b+v]$ joins $a$ to $b$ in $E^c$ and has length less than $|a-b|+3\epsilon$. 
\end{proof}

Unlike Theorem~\ref{metricallyremovable}, Lemma~\ref{lengthestimate} does not extend to higher dimensions: when $n\ge 3$, there is no universal constant $C$ such that every domain $\Omega\subset\mathbb R^n$ satisfies 
\[
\rho_\Omega(a,b)\le C(|a-b|+\mathcal H^{n-1}(\partial \Omega))
\quad \text{for all} \ a,b \in \Omega. 
\]
Indeed, we can connect two points $a,b\in \mathbb R^n$ by a very long circular arc and let $\Omega$ be a small tubular neighborhood of that arc; then $\mathcal H^{n-1}(\partial \Omega)$ is small.

\section{Removable sets for functions with bounded derivative}\label{secBoundedDer}

Carleson~\cite{Carleson} proved that sets of zero area are removable for Lipschitz functions, and the converse was proved later by Uy~\cite{Uy}. A Lipschitz-continuous holomorphic function has bounded derivative; however, the converse is in general false. The following proposition shows that the class of removable sets for functions with bounded derivative is much smaller than for Lipschitz functions.

\begin{proposition} A connected compact set with more than one point is not removable for holomorphic functions with bounded derivative.
\end{proposition}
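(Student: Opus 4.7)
The plan is to construct, for any connected compact set $K\subset\mathbb{C}$ with more than one point, a function $\tilde H$ holomorphic on $\mathbb{C}\setminus K$ with $|\tilde H'|\le 1$ that does not extend holomorphically across $K$. The key ingredient is the exterior Riemann map of $K$, with the power adjusted so that both the function and its primitive behave well at infinity.

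Let $\Omega$ be the unbounded component of $(\mathbb{C}\cup\{\infty\})\setminus K$; its complement on the Riemann sphere is the polynomial hull $K'=K\cup(\text{bounded components of }\mathbb{C}\setminus K)$, which is connected, compact, and has more than one point. Hence $\Omega$ is simply connected in the sphere, and the Riemann mapping theorem supplies a conformal map $\phi\colon\Omega\to\{|w|>1\}\cup\{\infty\}$ with $\phi(\infty)=\infty$. Near infinity $\phi(z)=cz+O(1)$ with $c\ne 0$, so $h:=1/\phi^2$ is a non-constant holomorphic function on $\Omega$ with $|h|<1$ and Laurent expansion $h(z)=c^{-2}z^{-2}+O(z^{-3})$. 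In particular the coefficient of $z^{-1}$ vanishes, so $\oint_\gamma h\,dz=0$ for any loop $\gamma$ in $\Omega$ encircling $K$. Since such a loop generates $\pi_1(\Omega)$ (with $\Omega$ viewed as a subset of $\mathbb{C}$), $h$ admits a single-valued primitive $H$ on $\Omega$.

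Next I would set $\tilde H=H$ on $\Omega$ and $\tilde H\equiv 0$ on each bounded component of $\mathbb{C}\setminus K$. Then $\tilde H$ is holomorphic on $\mathbb{C}\setminus K$ with $|\tilde H'|\le 1$. If $\tilde H$ extended to an entire function $F$, then $F'$ would be entire and agree with $h$ on $\Omega$, hence be bounded by $1$ on $\Omega$; together with continuity of $F'$ on the compact set $K$, this makes $F'$ bounded on $\mathbb{C}$. Liouville's theorem forces $F'$ to be constant, contradicting the non-constancy of $h=1/\phi^2$.

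The main obstacle, which dictates the use of $1/\phi^2$ rather than the more obvious $1/\phi$, is that the primitive must be single-valued. The function $1/\phi$ has a non-vanishing $z^{-1}$ coefficient $1/c$ at infinity, so $\int(1/\phi)\,dz$ picks up a period $2\pi i/c$ around $K$. Squaring kills this coefficient while preserving boundedness and non-constancy. A secondary bookkeeping issue is that $\mathbb{C}\setminus K$ need not be connected when $K$ has bounded complementary components; defining $\tilde H$ to vanish there resolves this uniformly and is even compatible with the Liouville step, as the presence of such a component forces the Liouville constant to be $0$.
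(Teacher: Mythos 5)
Your argument is essentially the paper's: the paper takes a conformal map $f$ of the sphere-complement of $K$ onto $\mathbb D$ with $f(\infty)=0$ (which is the same as your $1/\phi$ up to a rotation of the disk), notes that $f^2$ has zero residue at infinity so $F(z)=\int^z f(\zeta)^2\,d\zeta$ is single-valued with $|F'|<1$, and then invokes Liouville to rule out an entire extension. Your write-up is a little more careful than the paper's in one respect: you explicitly restrict the Riemann map to the unbounded component $\Omega$ and handle the bounded complementary components of $K$ (on which the paper's map $f$ is not actually defined) by setting $\tilde H\equiv 0$ there — a genuine, if minor, point that the paper silently skips.
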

\begin{proof} Let $K$ be such a set. There is a conformal map $f\colon \overline{\mathbb C}\setminus  K \to \mathbb D$ such that $f(\infty)= 0$. The square of $f$ is $O(1/|z|^2)$ as $z\to\infty$ and therefore has zero residue at infinity. This makes its antiderivative $F(z) = \int^z f(\zeta)^2\,d\zeta$ a holomorphic function in $K^c$. Clearly, $|F'|=|f^2|<1$ in $K^c$. If $F$ could be extended to an entire function, $F'$ would be a bounded entire function and therefore constant. This is impossible since $F'(z)\to 0$ as $z\to\infty$. 
\end{proof}

However, in a quasiconvex domain the boundedness of derivative implies Lipschitz continuity, since one can integrate the derivative along paths of controlled length. Therefore, every compact set of zero area with quasiconvex complement is removable for functions with bounded derivative. This leads to the following corollary of Theorem~\ref{metricallyremovable}.  

\begin{corollary}\label{H1removable} If $K\subset \mathbb C$ is a totally disconnected compact set and $\mathcal H^1(K) < \infty$, then $K$ is removable for holomorphic functions with bounded derivative.
\end{corollary}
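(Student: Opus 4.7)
The plan is to combine Theorem~\ref{metricallyremovable} with Carleson's removability theorem for Lipschitz holomorphic functions, following the strategy sketched in the paragraph preceding the corollary. Since $K$ is closed, totally disconnected, and $\mathcal{H}^1(K)<\infty$, Theorem~\ref{metricallyremovable} applies and shows $K$ is metrically removable; this is the essential geometric input.

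The next step is to upgrade ``bounded derivative'' to ``Lipschitz continuous'' on the complement. Let $f$ be holomorphic on $K^c$ with $|f'|\le M$, and fix $a,b\in K^c$. For every $\epsilon>0$, metric removability produces a rectifiable curve $\gamma\subset K^c$ from $a$ to $b$ with $\ell(\gamma)\le |a-b|+\epsilon$; the curve actually lies in $K^c$ (not just in $K^c\cup\{a,b\}$) because its endpoints already do and Definition~\ref{defmetricremovable} only asks for disjointness from $E\setminus\{a,b\}$. Since $f$ is holomorphic on the open set $K^c$, the fundamental theorem for path integrals gives $f(b)-f(a)=\int_\gamma f'(z)\,dz$, hence $|f(b)-f(a)|\le M\ell(\gamma)\le M(|a-b|+\epsilon)$. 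Letting $\epsilon\to 0$ shows $f$ is $M$-Lipschitz on $K^c$.

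To conclude, observe that $\mathcal{H}^1(K)<\infty$ forces $\mathcal{H}^2(K)=0$, so $K$ has empty interior, $K^c$ is dense in $\mathbb{C}$, and $f$ extends uniquely to an $M$-Lipschitz function $\tilde f$ on all of $\mathbb{C}$. By Carleson's theorem~\cite{Carleson}, compact sets of zero two-dimensional Lebesgue measure are removable for Lipschitz holomorphic functions; applied to $\tilde f$, which is globally Lipschitz and holomorphic off the compact null set $K$, this yields that $\tilde f$ is entire. Since the only substantive ingredient is Theorem~\ref{metricallyremovable}, I do not anticipate a real obstacle beyond carefully verifying this chain of implications; the one point worth checking is the path-integral identity $f(b)-f(a)=\int_\gamma f'\,dz$ for a merely rectifiable curve in an open set, which is a standard consequence of local antiderivatives for holomorphic functions.
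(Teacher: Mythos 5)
Your proposal is correct and follows the same route the paper sketches in the paragraph preceding the corollary: apply Theorem~\ref{metricallyremovable} to obtain metric removability, integrate $f'$ along near-geodesic paths in $K^c$ to upgrade bounded derivative to Lipschitz continuity, and then invoke Carleson's removability theorem on the zero-area set $K$. You merely make explicit two points the paper leaves implicit, namely the path-integral identity $f(b)-f(a)=\int_\gamma f'\,dz$ and the fact that $\mathcal H^1(K)<\infty$ forces $\mathcal H^2(K)=0$.
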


It is clear that a set of zero analytic capacity is removable for functions with bounded derivative, since its complement does not support any nonconstant bounded holomorphic functions. However, Corollary~\ref{H1removable} also applies to some sets of positive capacity, such as a totally disconnected compact subset of $\mathbb R$ with positive length. 

\section{Comparison of thinness conditions}\label{secThinness}

Tabor and Tabor~\cite{TaborTabor} introduced the concept of ``intervally thin'' sets, which is related to removability of sets for convex functions~\cite{PokornyRmoutil, TaborTabor}.  

\begin{definition}\label{defintervallythin}~\cite{TaborTabor} A set $E\subset \mathbb R^n$ is \emph{intervally thin} if for all $a,b\in \mathbb R^n$ and $\epsilon>0$ there exist points $a',b'$ such that $|a-a'|<\epsilon$, $|b-b'|<\epsilon$, and the line segment $[a',b']$ is disjoint from $E$. 
\end{definition}

This concept is closely related to metric removability: the reader may wish to observe that the set constructed in  Proposition~\ref{positiveMeasure} is intervally thin. 
Definition~\ref{defintervallythin} can be rephrased as: any two open balls in $\mathbb R^n$ can be connected by a line segment disjoint from $E$. The latter statement is made more precise by the following result. 

\begin{lemma}\label{improveintervallythin} Suppose $E\subset \mathbb R^n$ is intervally thin. Let $P$ and $Q$ be distinct $(n-1)$-dimensional hyperplanes in $\mathbb R^n$. Then for any two points $p\in P$, $q\in Q$ and any $r>0$ the sets $A = P\cap B(p, r)$ and $B = Q\cap B(q,r)$ can be connected by a line segment disjoint from $E$.
\end{lemma}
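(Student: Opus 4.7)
The plan is to position two small open balls in $\mathbb R^n$, one near $p$ and one near $q$, so that any segment joining a point of the first ball to a point of the second is forced to cross both $P$ and $Q$ transversally in its interior. Applying the intervally thin property to these two balls will then yield a segment $[a_1,b_1]$ disjoint from $E$, and its intersections $a'$ with $P$ and $b'$ with $Q$ will produce points in $A$ and $B$ respectively, with $[a',b']\subset[a_1,b_1]$ automatically disjoint from $E$.

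First I would reduce to the case $p\notin Q$ and $q\notin P$. This is automatic when $P$ and $Q$ are parallel; otherwise $P\cap Q$ is an $(n-2)$-flat, so $A\setminus Q$ is open and dense in $A$, and we may replace $p$ by any such point (shrinking $r$ slightly so the new ball still lies inside $A$), and likewise for $q$. Let $n_P$ be the unit normal to $P$ pointing into the open half-space containing $q$, and $n_Q$ the unit normal to $Q$ pointing into the open half-space containing $p$. These vectors are never equal: in the parallel case $n_P=-n_Q$, and if $P$ and $Q$ meet then $n_P,n_Q$ are linearly independent. Setting $w=n_P-n_Q$ therefore gives $\langle w,n_P\rangle>0$ and $\langle w,n_Q\rangle<0$.

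For small $\eta>0$, let $p^*=p-\eta w$ and $q^*=q+\eta w$. A direct sign computation from the inequalities above shows that $p^*$ lies strictly on the side of $P$ opposite to $q$ and strictly on the same side of $Q$ as $p$, while $q^*$ lies strictly on the side of $P$ containing $q$ and strictly on the side of $Q$ opposite to $p$. Choose $\delta>0$ small enough (depending on $\eta$) that $B(p^*,\delta)$ and $B(q^*,\delta)$ are each contained in the corresponding pair of open half-spaces. By intervally thinness there exist $a_1\in B(p^*,\delta)$ and $b_1\in B(q^*,\delta)$ with $[a_1,b_1]\cap E=\emptyset$. Since $a_1$ and $b_1$ lie strictly on opposite sides of $P$, and strictly on opposite sides of $Q$, the segment $[a_1,b_1]$ meets $P$ at a unique point $a'$ in its interior and meets $Q$ at a unique point $b'$ in its interior, whence $[a',b']\subset[a_1,b_1]$ is also disjoint from $E$.

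It remains to verify $a'\in A$ and $b'\in B$. As $\eta\downarrow 0$ (with $\delta$ chosen accordingly), the points $a_1,b_1$ tend to $p,q$, and the intersection of the line through $a_1,b_1$ with a fixed transverse hyperplane depends continuously on the line; hence $a'\to p$ and $b'\to q$. Choosing the parameters small enough therefore gives $a'\in A$ and $b'\in B$. The main technical delicacy is tracking the lower bound on the distance from $p^*$ to $P$, which scales like $\eta$ and can become small when $P$ and $Q$ meet at a shallow angle, forcing $\delta$ to shrink accordingly; however only the existence of sufficiently small $(\eta,\delta)$ is required, so this is routine.
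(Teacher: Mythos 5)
Your proof is correct. Both your argument and the paper's revolve around the same central idea: construct two small balls positioned so that every segment joining them must cross $P$ and $Q$ transversally, with the crossing points landing inside $A$ and $B$, and then invoke interval thinness. The two setups for achieving this differ, however. The paper observes that (in the non-parallel case) the difference set $A - B$ is $n$-dimensional, and hence contains a vector transversal to both hyperplanes; it picks actual points $a\in A$, $b\in B$ on a transversal line $L$, places $a_1, b_1$ on $L$ beyond $a$ and $b$, and takes small balls around those. You instead first reduce to $p\notin Q$, $q\notin P$, introduce the chosen normals $n_P, n_Q$, and perturb $p$ and $q$ along $w = n_P - n_Q$ to obtain $p^*, q^*$ sitting strictly on the required sides of both hyperplanes; you then recover the crossing points near $p$ and $q$ by a limiting argument as $\eta,\delta\downarrow 0$.

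Each approach buys something. The paper's difference-set argument is more compact, and placing the target intersection points at $a$ and $b$ (strictly interior to the disks) makes the final continuity step nearly automatic. Your approach is more computational, but it is explicit about the parallel versus intersecting dichotomy and handles the parallel case directly through $n_P = -n_Q$, whereas the paper's claim that $A-B$ is $n$-dimensional is, strictly speaking, false when $P$ and $Q$ are parallel (the conclusion that a transversal direction exists is still trivially true there, so the paper's proof is not wrong, but its stated justification is imprecise in that case). Your tracking of how $\operatorname{dist}(p^*,P)$ scales like $\eta(1-\langle n_P,n_Q\rangle)$, and hence can degenerate when the hyperplanes meet at a shallow angle, is a real feature of the construction, and your remark that this only affects how small $\delta$ must be is the right resolution.
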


\begin{proof} Since both $A$ and $B$ are $(n-1)$-dimensional disks not contained in the same hyperplane, the difference set $A-B=\{a-b\colon a\in A, b\in B\}$ is $n$-dimensional.  Therefore, there exist $a\in A$ and $b\in B$ such that the vector $a-b$ is not parallel to either $P$ or $Q$. Let $L$ be the line through $a$ and $b$. Pick two points $a_1,b_1\in L$ such that both $a$ and $b$ lie strictly between $a_1$ and $b_1$.  

For sufficiently small $\epsilon>0$ any line segment connecting $B(a_1, \epsilon)$ to $B(b_1, \epsilon)$ 
intersects both $A$ and $B$. Since $E$ is intervally thin, some of such line segments are disjoint from $E$, proving the claim. 
\end{proof}

In order to obtain a sufficient removability condition for  holomorphic functions with restricted argument of derivative (Theorem~\ref{argremovable}), we need the concept of a Lipschitz-thin set, which is developed in the remainder of this section.

\begin{definition}\label{lipgraph} Let $\epsilon>0$. 
A curve $\gamma\colon [\alpha, \beta]\to \mathbb R^n$ is an $\epsilon$-Lipschitz graph if for every $\alpha \le t<s\le \beta$ the angle between the vectors $\gamma(s)-\gamma(t)$ and $\gamma(\beta)-\gamma(\alpha)$ is less than $\epsilon$.  
\end{definition}

\begin{definition}\label{defLipthin} A set $E\subset \mathbb R^n$ is \emph{Lipschitz-thin} if for any $\epsilon>0$, any two points  $a,b\in \mathbb R^n$ can be connected by an $\epsilon$-Lipschitz graph that is disjoint from $E\setminus \{a,b\}$.   
\end{definition}

The following result is a counterpart of Lemma~\ref{equivalentdef} for Lipschitz-thin sets.

\begin{lemma}\label{equivalentLipthin} A set $E\subset \mathbb R^n$ is Lipschitz-thin if and only if it has empty interior and any two points  $a,b\in E^c$ can be connected by an $\epsilon$-Lipschitz graph within $E^c$.
\end{lemma}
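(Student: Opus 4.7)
The forward direction parallels Lemma~\ref{equivalentdef}. If some ball $B(p,r)$ lies in $E$, set $a=p$ and pick $b$ with $|b-p|>r$; any $\epsilon$-Lipschitz graph from $a$ to $b$ contains points arbitrarily close to, and distinct from, $a$, which sit in $E\setminus\{a,b\}$. Hence $E$ has empty interior. When $a,b\in E^c$ we have $E\setminus\{a,b\}=E$, so the connecting graph automatically lies in $E^c$.

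For the converse, fix $a\ne b$ in $\mathbb R^n$ and $\epsilon>0$, and set $v=b-a$, $e=v/|v|$. Choose small $\theta<\epsilon/20$ and $\eta<\epsilon/10$. If $a\in E$, use the empty interior of $E$ to inductively select $a_1,a_2,\ldots\in E^c$ inside the open cone
\[
\{a+te+sw\colon t>0,\ w\perp e,\ |w|=1,\ |s|<t\tan\theta\},
\]
with $t_k:=\langle a_k-a,e\rangle$ strictly decreasing, $t_1$ small compared to $|v|$, and $t_{k+1}<t_k/4$; if $a\in E^c$ simply set $a_1=a$ and skip the iteration. Perform the symmetric construction at $b$. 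Using the hypothesis, join the pairs $(a_{k+1},a_k)$, $(a_1,b_1)$, and $(b_k,b_{k+1})$ by $\eta$-Lipschitz graphs $\gamma_k^a$, $\gamma_0$, and $\gamma_k^b$ contained in $E^c$, and concatenate in the natural order $\ldots\gamma_2^a\gamma_1^a\gamma_0\gamma_1^b\gamma_2^b\ldots$, adding $a,b$ as limit endpoints, to form a curve $\gamma$ from $a$ to $b$ with $\gamma\setminus\{a,b\}\subset E^c$.

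To verify that $\gamma$ is an $\epsilon$-Lipschitz graph, reparameterize it as a graph over the $v$-axis, $t\mapsto a+t(v/|v|)+\phi(t)$ for $t\in[0,|v|]$, with $\phi\perp v$ and $\phi(0)=\phi(|v|)=0$. The cone construction together with the decay $t_{k+1}<t_k/4$ forces each local endpoint chord (such as $a_k-a_{k+1}$ or $b_1-a_1$) to make angle at most $2\theta$ with $v$, so every secant of any one piece makes angle at most $\eta+2\theta<\epsilon/2$ with $v$. Equivalently, on the projection interval of each piece the perpendicular component $\phi$ is $\tan(\epsilon/2)$-Lipschitz, and since consecutive pieces meet at common endpoints the triangle inequality across any joint yields a globally $\tan(\epsilon/2)$-Lipschitz $\phi$ on $[0,|v|]$. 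Consequently every secant of $\gamma$ makes angle less than $\epsilon/2<\epsilon$ with $v$, as required.

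The main difficulty is that the $\epsilon$-Lipschitz graph property is defined against the intrinsic endpoint chord, so Lipschitz estimates on a small piece (measured against its local chord) do not automatically transfer to the global chord $b-a$. The cone construction is calibrated so that every local chord stays within $O(\theta)$ of $e$, keeping the angular losses uniformly small, and the geometric decay of the $t_k$ guarantees that the infinite tail of pieces near $a$ (and symmetrically near $b$) extends continuously to the endpoints.
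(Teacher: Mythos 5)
Your proof is correct and follows essentially the same route as the paper: both use the empty interior of $E$ to select a doubly-infinite chain of points in $E^c$ converging to $a$ and $b$, chosen in small regions that force each consecutive chord to be nearly parallel to $b-a$, and then concatenate $\eta$-Lipschitz connectors from the hypothesis. The paper isolates the geometric selection step into a separate lemma (Lemma~\ref{doublesequenceofballs}, using shrinking balls along $[a,b]$) that it reuses elsewhere, whereas you use a cone at each endpoint with geometrically decaying axial coordinate, and you spell out more explicitly the reparameterization-as-a-graph argument showing the concatenation is globally an $\epsilon$-Lipschitz graph.
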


The proof of Lemma~\ref{equivalentLipthin} relies on a geometric fact which we isolate into a lemma. 

\begin{lemma}\label{doublesequenceofballs} For any distinct points $a,b\in \mathbb R^n$ and any $\epsilon>0$ there exists a double-infinite sequence $\{x_k\colon k\in\mathbb Z\}\subset [a,b]$ and positive numbers $r_k>0$ such that 
\begin{enumerate}[(a)]
\item $x_k\to a$ as $k\to-\infty$ and $x_k\to b$ as $k\to\infty$
\item For any choice of points $y_k\in B_k:= B(x_k, r_k)$, the angle between the vectors $y_{k}-y_{k-1}$ and $b-a$ is less than $\epsilon$. 
\end{enumerate}
\end{lemma}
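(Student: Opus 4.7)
\textbf{Proof proposal for Lemma~\ref{doublesequenceofballs}.}

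The plan is to place the centers $x_k$ on the segment $[a,b]$ with a chosen schedule of accumulation at both endpoints, then take the radii $r_k$ small compared to the spacing between consecutive centers. Since consecutive centers lie on $[a,b]$, the vector $x_k-x_{k-1}$ is already parallel to $b-a$; a small perturbation of the endpoints will therefore tilt the connecting vector only slightly.

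Concretely, I would choose a strictly increasing sequence $\{t_k\}_{k\in\mathbb Z}\subset (0,1)$ with $t_k\to 0$ as $k\to-\infty$ and $t_k\to 1$ as $k\to+\infty$ (for instance $t_k=1/(1+e^{-k})$), and set $x_k=a+t_k(b-a)$; this secures (a). Let $d_k=|x_k-x_{k-1}|=(t_k-t_{k-1})|b-a|>0$. Fix a constant $c=c(\epsilon)\in(0,1)$ with $c<\sin\epsilon$, say $c=\tfrac12\sin\epsilon$, and define
\[
r_k\;=\;\frac{c}{4}\,\min(d_k,d_{k+1})\;>\;0.
\]
With this choice $r_k+r_{k-1}\le \tfrac{c}{2}\,d_k$ for every $k$, since both $r_k$ and $r_{k-1}$ are bounded by $(c/4)d_k$.

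To verify (b), fix any $y_k\in B_k$ and $y_{k-1}\in B_{k-1}$. Write $y_k-y_{k-1}=(x_k-x_{k-1})+w$, where $|w|\le r_k+r_{k-1}\le \tfrac{c}{2}d_k$. The vector $x_k-x_{k-1}$ is a positive multiple of $b-a$ of length $d_k$. Decomposing $w$ into components parallel and perpendicular to $b-a$, the perpendicular component of $y_k-y_{k-1}$ has magnitude at most $|w|\le \tfrac{c}{2}d_k$, while the parallel component is at least $d_k-|w|\ge(1-c/2)d_k\ge d_k/2$. Hence if $\theta$ denotes the angle between $y_k-y_{k-1}$ and $b-a$, then
\[
\sin\theta\;\le\;\frac{|w|}{|y_k-y_{k-1}|}\;\le\;\frac{(c/2)d_k}{(1-c/2)d_k}\;\le\;c\;<\;\sin\epsilon,
\]
so $\theta<\epsilon$, as required.

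The only minor obstacle is bookkeeping the constants so that the bound on the perpendicular component is strictly less than $\sin\epsilon$ times the length of $y_k-y_{k-1}$; taking $c=\tfrac12\sin\epsilon$ (or any fixed $c<\sin\epsilon$) leaves a safe margin. Nothing deeper is needed: once $x_k$ are collinear with $b-a$, the statement reduces to an elementary estimate on how much an $(r_k+r_{k-1})$-perturbation can rotate a vector of length $\sim d_k$.
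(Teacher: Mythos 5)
Your proof is correct and takes essentially the same approach as the paper: place the centers on $[a,b]$ accumulating at the endpoints, choose the radii small compared to the consecutive gaps, and bound the angle by comparing the perpendicular perturbation (at most $r_k+r_{k-1}$) to the length of $y_k-y_{k-1}$. The only cosmetic differences are that the paper commits to a specific geometric sequence $x_k$ with radii $\delta^{\pm 2k}$ and tunes $\delta$, then estimates via the projection ratio $|P(y_k)-P(y_{k-1})|/|y_k-y_{k-1}|\ge(1-C\delta)/(1+C\delta)$, whereas you leave $\{t_k\}$ arbitrary, directly set $r_k=\tfrac{c}{4}\min(d_k,d_{k+1})$ so the ratio $(r_k+r_{k-1})/d_k$ is forced to be small, and work with $\sin\theta$ instead; your bookkeeping is marginally cleaner (the smallness condition is built into the definition of $r_k$ rather than verified afterward). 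One trivial caveat: you implicitly use $\epsilon<\pi/2$ when writing $c<\sin\epsilon$, but for $\epsilon\ge\pi/2$ one can simply replace $\epsilon$ by $\min(\epsilon,\pi/3)$, so this is not a real gap.
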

\begin{proof} Without loss of generality we may assume $a=0$ and $|b|=1$. Let $\delta>0$ be a small number to be chosen later. Define 
\[
x_k = \begin{cases} \delta^{-k}b,\  &k <0; \\
(1-\delta^{k+1})b,\ &k\ge 0,
\end{cases}
\quad \text{and} \quad  
r_k = \begin{cases} \delta^{-2k},\   &k <0; \\
\delta^{2k+2} ,\   &k\ge 0.
\end{cases}
\]
Observe that 
\begin{equation}\label{balls1}
|x_{k}-x_{k-1}| = 
\begin{cases} \delta^{|k|}-\delta^{|k|+1}, \ & k \ne 0; \\ 
1-2\delta, \ & k = 0. 
\end{cases}
\end{equation}
For any choice of points $y_k\in B_k$ we have 
\[
|y_k-y_{k-1}|\le |x_{k}-x_{k-1}|  + r_k + r_{k-1}. 
\]
On the other hand, writing $P$ for the orthogonal projection onto the line along $b$, we have 
\[
|P(y_k)-P(y_{k-1})|\ge |x_{k}-x_{k-1}|  - r_k - r_{k-1}. 
\]
Comparing~\eqref{balls1} with the definition of $r_k$, we find that $(r_k + r_{k-1})/|x_{k}-x_{k-1}| \le C\delta$ with $C$ independent of $k$ or $\delta$. By choosing $\delta$ sufficiently small, we can make the ratio 
\[
\frac{|P(y_k)-P(y_{k-1})|}{|y_k-y_{k-1}|}
\ge \frac{|x_{k}-x_{k-1}|  - r_k - r_{k-1}}{|x_{k}-x_{k-1}|  + r_k + r_{k-1}} \ge \frac{1-C\delta}{1+C\delta}
\]
arbitrarily close to $1$, which implies the conclusion of the lemma. 
\end{proof}

\begin{proof}[Proof of Lemma~\ref{equivalentLipthin}] The necessity part is clear. To prove sufficiency, fix $a,b\in \mathbb R^n$ and $\epsilon>0$, and let $B_k$ be as in Lemma~\ref{doublesequenceofballs}. For each $k\in\mathbb Z$ pick $y_k\in B_k\setminus E$ which is possible because $E$ has empty interior. 

Connect each $y_k$ to $y_{k-1}$ by an $\epsilon$-Lipschitz graph $\gamma_k\subset E^c$. The concatenation of these curves is a curve from $a$ to $b$ that lies in $E^c$ except possibly its endpoints. By construction, this curve is a $(2\epsilon)$-Lipschitz graph.
\end{proof}

As another application of Lemma~\ref{doublesequenceofballs}, we relate the notions of ``intervally thin" and ``Lipschitz-thin'' on the plane. 

\begin{proposition}\label{interval2Lip} Any intervally thin set   $E\subset \mathbb R^2$ is Lipschitz thin. 
\end{proposition}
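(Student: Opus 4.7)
The plan is to construct the desired $\epsilon$-Lipschitz graph as a piecewise linear path whose pieces alternate between short ``transit'' segments $A_k \subset B_k$ (chosen inside each ball $B_k$ from Lemma~\ref{doublesequenceofballs} using intervally thin) and ``connector'' segments $L_k$ joining $A_{k-1}$ to $A_k$ (supplied by Lemma~\ref{improveintervallythin}). By arranging the orientations and endpoint placements appropriately, every piece of the resulting curve will lie in $E^c$ and make angle less than $\epsilon$ with $b - a$, which forces every sub-chord into the same $\epsilon$-cone by convexity.

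By Lemma~\ref{equivalentLipthin}, and because any intervally thin set has empty interior (a ball $B \subset E$ would prevent a transverse segment from being perturbed off $E$), it suffices to connect two arbitrary $a, b \in E^c$ by an $\epsilon$-Lipschitz graph inside $E^c$. I would first invoke Lemma~\ref{doublesequenceofballs} to obtain balls $B_k = B(x_k, r_k)$ with $x_k \to a, b$ as $k \to \mp\infty$, satisfying the consecutive-chord angle property. In each $B_k$, I would apply intervally thin to a pair of points offset from $x_k$ by $\pm r_k/4$ along $(b-a)/|b-a|$ but with a small added perpendicular tilt of slope $\theta < \epsilon/2$; the result is a segment $A_k = [p_k, q_k] \subset B_k \setminus E$ whose supporting line $P_k$ has slope close to $\theta$. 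Since distinct $P_k$ are parallel with different $y$-intercepts (the $x_k$ being pairwise distinct), Lemma~\ref{improveintervallythin} applies to each consecutive pair $(A_{k-1}, A_k)$; I would take the auxiliary reference points in its proof to be $u = q_{k-1}$ (right end of $A_{k-1}$) and $v = p_k$ (left end of $A_k$), yielding a segment $L_k \subset E^c$ whose endpoints $\alpha_k, \beta_k$ lie close to $q_{k-1}$ and $p_k$ respectively.

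Concatenation then gives a polygonal curve whose pieces are: sub-segments of $A_k$ from $\beta_k$ (near the left end $p_k$) to $\alpha_{k+1}$ (near the right end $q_k$), which run forward along $A_k$ and hence lie in $A_k \subset E^c$ with angle near $\theta < \epsilon$ from $b - a$; and the segments $L_k \subset E^c$, which have angle less than $\epsilon$ from $b - a$ by Lemma~\ref{doublesequenceofballs}(b). The curve joins $a$ to $b$ as limits of its vertices. Every sub-chord is a vector sum of pieces from the convex $\epsilon$-cone around $b - a$, so the curve is the desired $\epsilon$-Lipschitz graph. The main technical obstacle is ensuring that the sub-segment on each $A_k$ runs forward rather than backward (a backward sub-segment would have angle near $\pi$ with $b - a$, ruining the Lipschitz condition); this is resolved by the careful choice of reference points $q_{k-1}, p_k$ in Lemma~\ref{improveintervallythin} described above, together with a sufficiently tight tolerance so that the endpoints $\beta_k, \alpha_{k+1}$ are well separated on $A_k$.
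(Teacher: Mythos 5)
Your overall plan is in the same spirit as the paper's and uses the same three ingredients (Lemma~\ref{equivalentLipthin}, Lemma~\ref{doublesequenceofballs}, Lemma~\ref{improveintervallythin}), but there is a concrete gap in the way you splice the pieces together, and the architecture you chose is more fragile than necessary.

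The paper chains segments $L_1, L_2, \dots$ directly: $L_1\subset E^c$ joins $B_0$ to $B_1$ (from intervally thinness), and then $L_{k+1}\subset E^c$ joins $L_k\cap B_k$ to $B_{k+1}$ via Lemma~\ref{improveintervallythin} (applied with the line through $L_k$ and a transversal chord of $B_{k+1}$). Since each $L_{k+1}$ starts \emph{on} $L_k$, consecutive segments share a vertex, and after trimming the overlaps the final curve is a concatenation of sub-segments of the $L_k$ only. Every such sub-segment automatically avoids $E$ because each $L_k$ does. You instead interpose an auxiliary segment $A_k=[p_k,q_k]\subset B_k\setminus E$ in each ball and then use Lemma~\ref{improveintervallythin} to produce $L_k$ from $A_{k-1}$ to $A_k$, so your curve alternates pieces of $L_k$ with pieces of $A_k$. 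This is where the gap occurs: you take the lemma's reference points to be the endpoints $q_{k-1}$ and $p_k$ of the auxiliary segments, so the lemma's disks are $P_{k-1}\cap B(q_{k-1},r)$ and $P_k\cap B(p_k,r)$. Half of each of these disks sticks out of $A_{k-1}$ and $A_k$ respectively, and the lemma gives no control over which side the endpoints $\alpha_k,\beta_k$ land on. Thus $\beta_k$ may lie on $P_k$ just \emph{outside} $A_k$, and the piece $[\beta_k,\alpha_{k+1}]$ you insert would then leave $A_k$ and may meet $E$. The ``careful choice of reference points'' and ``tight tolerance'' you invoke do not resolve this, because the problem is one of sidedness, not of size. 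A fix is to center the lemma's disks at interior points of $A_k$ at distance greater than $r$ from its endpoints, but the cleaner repair is simply to drop the auxiliary $A_k$ and chain the $L_k$ directly as the paper does, so that every piece of the final curve is already known to lie in $E^c$.

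Two smaller points. The deliberate ``perpendicular tilt of slope $\theta$'' is unnecessary: Lemma~\ref{doublesequenceofballs}(b) already forces every chord between consecutive balls into the $\epsilon$-cone around $b-a$, and there is no advantage to making the $A_k$ lines have a common slope. Also, you state that the supporting lines $P_k$ ``have slope close to $\theta$'' and then immediately treat them as exactly parallel; Lemma~\ref{improveintervallythin} only requires the two hyperplanes to be distinct, so this is harmless, but the argument in the paper's proof of that lemma (that the difference set $A-B$ is $n$-dimensional) actually presumes non-parallel hyperplanes, so if you do want the $P_k$ to be genuinely parallel you would need to note that in that case \emph{every} $a-b$ with $a\in A$, $b\in B$ is already transversal to both, rather than appealing to full-dimensionality of $A-B$.
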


\begin{proof} An intervally thin set has empty interior by definition. Fix $\epsilon>0$ and distinct points $a,b\in E^c$. We may assume $a=0$ and $b=1$, identifying $\mathbb R^2$ with $\mathbb C$. Let $B_k$, $k\in\mathbb Z$, be the disks provided by Lemma~\ref{doublesequenceofballs}. 

Since $E$ is intervally thin, there exists a line segment 
$L_1\subset E^c$ connecting $B_0$ to $B_1$. By Lemma~\ref{improveintervallythin}, there is a line segment $L_2$ connecting $L_1\cap B_1$ to $B_2$. Continuing in this way, let $L_{k+1}\subset E^c$ be a line segment connecting $L_k\cap B_k$ to $B_{k+1}$. After erasing extraneous parts of segments $L_k$ (namely, the part of $L_k\cap B_k$ extending beyond the point $L_k\cap L_{k+1}$) we obtain a curve that begins with $L_1$ and ends at $1$. 

Similarly, let $L_{0}\subset E^c$ be a line segment connecting $L_1\cap B_0$ to $B_{-1}$ and erase the part of $L_1\cap B_0$ extending beyond $L_1\cap L_0$, etc. This process results in the curve
\[
\gamma = \{0,1\}\cup \bigcup_{k\in\mathbb Z} L_k
\]
which is the desired $\epsilon$-Lipschitz graph connecting $0$ to $1$ within $E^c$.
\end{proof}

The proof of Proposition~\ref{interval2Lip} breaks down in  dimensions $n>2$, where Lemma~\ref{improveintervallythin} provides a way to connect $(n-1)$-dimensional disks instead of $1$-dimensional line segments. However, we still have such a result for closed sets. 

\begin{proposition}\label{closedLipthin} If a closed set $E\subset \mathbb R^n$ is intervally thin, then it is Lipschitz-thin. 
\end{proposition}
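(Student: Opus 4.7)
The plan is to mimic the construction in Proposition~\ref{interval2Lip}, routing a polygonal curve through the doubly-infinite sequence of shrinking balls supplied by Lemma~\ref{doublesequenceofballs}, but to replace the iterative application of Lemma~\ref{improveintervallythin} (which fails for $n>2$ because $L_k\cap B_k$ is only one-dimensional, whereas the lemma needs $(n-1)$-dimensional disks) with a Baire category argument. The closedness hypothesis on $E$ enters in a single, essential way: it makes the condition ``the segment $[y_{k-1}, y_k]$ is disjoint from $E$'' an \emph{open} condition on the pair of endpoints.

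By Lemma~\ref{equivalentLipthin} it suffices to show that any two points $a,b\in E^c$ can be joined by an $\epsilon$-Lipschitz graph lying in $E^c$, since intervally thin sets have empty interior. First I would apply Lemma~\ref{doublesequenceofballs} to the pair $a,b$ to obtain centers $x_k\in[a,b]$ and radii $r_k>0$ such that for any selection $y_k\in B_k:=B(x_k,r_k)$ the angle between $y_k-y_{k-1}$ and $b-a$ is less than $\epsilon$. Consider the Polish space $X=\prod_{k\in\mathbb Z}B_k$ and, for each $k$, define
\[
A_k=\{(y_j)_{j\in\mathbb Z}\in X:[y_{k-1},y_k]\cap E=\emptyset\}.
\]
Closedness of $E$ together with compactness of $[y_{k-1},y_k]$ shows $A_k$ is open in $X$. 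For density, given any basic open neighborhood in $X$, intervally thinness of $E$ permits a simultaneous perturbation of the two coordinates $(y_{k-1},y_k)$ (leaving all others fixed) that produces a segment disjoint from $E$. The Baire category theorem then yields a point $(y_k)\in\bigcap_{k\in\mathbb Z}A_k$.

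Let $\gamma$ be the concatenation of the segments $[y_{k-1},y_k]$ with $a,b$ appended as endpoints. Since $y_k\to a,b$ as $k\to\mp\infty$ and the series $\sum_k|y_k-y_{k-1}|$ is finite (controlled by $|b-a|$ plus the summable $r_k$'s constructed in Lemma~\ref{doublesequenceofballs}), $\gamma$ reparametrizes to a continuous curve on a bounded interval, lying entirely in $E^c$. To verify the $\epsilon$-Lipschitz graph property, set $\hat e=(b-a)/|b-a|$; each segment vector $v=y_k-y_{k-1}$ satisfies $v\cdot\hat e>|v|\cos\epsilon$ by Lemma~\ref{doublesequenceofballs}, and any subchord $\gamma(s)-\gamma(t)$ is a sum $\sum v_i$ of partial-segment vectors of the same type, so
\[
\Bigl(\sum v_i\Bigr)\cdot\hat e>\cos\epsilon\sum|v_i|\ge\cos\epsilon\,\Bigl|\sum v_i\Bigr|,
\]
which gives the required angle bound. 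I expect the main conceptual step to be the passage from direct geometric construction to Baire category: once one recognizes that closedness buys openness of $A_k$ while intervally thinness buys density, the density verification is where the ``soft'' hypothesis is actually used---crucially, both endpoints of the problematic segment are allowed to move simultaneously, sidestepping exactly the difficulty that caused the proof of Proposition~\ref{interval2Lip} to require a one-dimensional intersection condition.
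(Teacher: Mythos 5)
Your proof is correct, but it takes a genuinely different and considerably heavier route than the paper's. The paper's proof is a direct three-segment construction and does not use Lemma~\ref{doublesequenceofballs} or Baire category at all: since $E$ is closed, there is $r>0$ with $B(a,r)$ and $B(b,r)$ disjoint from $E$; set $u=(b-a)/|b-a|$, nudge $a_1=a+(1-\delta)ru$ and $b_1=b-(1-\delta)ru$ slightly toward each other, apply interval thinness \emph{once} to obtain a segment $[a',b']\subset E^c$ joining the tiny balls $B(a_1,r\delta)$ and $B(b_1,r\delta)$, and observe that the polygonal path $a\to a'\to b'\to b$ lies in $E^c$ (the outer two segments live inside the safety balls) and that all three segments are nearly parallel to $u$ when $\delta$ is small, giving the $\epsilon$-Lipschitz graph. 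Both approaches use closedness essentially, but in different ways: you use it to make the disjointness condition defining $A_k$ open in the product of balls, while the paper uses it to provide safety neighborhoods of the endpoints in which the path can be routed freely. Your argument correctly identifies why the proof of Proposition~\ref{interval2Lip} breaks for $n>2$ and shows that a Baire-category selection can rescue the chain-of-balls construction there; this is a valid and instructive alternative, and your verification of the angle bound via the inequality $(\sum v_i)\cdot\hat e>\cos\epsilon\sum|v_i|\ge\cos\epsilon|\sum v_i|$ is sound, including for the tail sums near the endpoints. The trade-off is that the paper buys simplicity (one application of interval thinness, three segments, no category argument) where you buy uniformity with the two-dimensional proof.
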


\begin{proof} Given distinct points $a,b\in E^c$ and $\delta\in (0,1)$, pick $r>0$ be such that both $B(a,r)$ and $B(b,r)$ are disjoint from $E$ and $r<\delta|a-b|$. Let $u=(b-a)/|b-a|$ and define
\[
a_1 = a + (1-\delta)r u,\quad \text{and}\quad 
b_1 = b - (1-\delta)r u.
\]
The balls $B(a_1, r\delta)$ and $B(b_1, r\delta)$ are connected by some line segment $[a',b']\subset E^c$. The piecewise linear curve $aa'b'b$ is disjoint from $E$, and all three of its segments are nearly parallel to vector $u$ when $\delta$ is small enough. Thus, $aa'b'b$ is the desired $\epsilon$-Lipschitz graph with small $\epsilon$.  
\end{proof}

The converse of Proposition~\ref{interval2Lip} is false: a Lipschitz-thin set need not be intervally thin, as the following two examples show.

\begin{example}\label{Lipnotinterval} In $\mathbb R^2$, let $E=I_1\cup I_2\cup I_3$ where 
\[
\begin{split}
I_1&=\{(x,1)\colon x\in [-1,1]\setminus \mathbb Q\}; \\  
I_2&=\{(x,0)\colon x\in [-1,1]\cap \mathbb Q\}; \\  
I_3&=\{(x,-1)\colon x\in [-1,1]\setminus \mathbb Q\}. 
\end{split}
\]
The set $E$ is Lipschitz-thin, because all three sets $I_k$ are easily avoided by a polygonal path that can be made arbitrarily close to straight. On the other hand, there is no line segment that connects a small neighborhood of $(0, 2)$ to a small neighborhood of $(0, -2)$ while avoiding $E$. Indeed, such a line segment $L$ would contain two points $(u,1)$ and $(v, -1)$ with $u,v\in \mathbb Q$. Then $((u+v)/2, 0)\in L\cap E$, proving the claim.
\end{example}

The set $E$ in Example \ref{Lipnotinterval} is not closed. A compact set with the same properties can be constructed with the following iterative process.   

\begin{example}\label{Lipnotintervalcompact}
Let $\delta = 2^{-5}$.
Define for every $n,k \in \mathbb{N}$ and $i \in \{0,1,2\}$ a similitude mapping $f_{n,i,k}\colon \mathbb R^2 \to \mathbb R^2$ by setting
\[
f_{n,i,k}(x) = \begin{cases}
(1-2\delta)2^{-2n}x + (i\cdot 2^{-n}, (k+\delta)\cdot 2^{-2n}), & \text{if }i=0, 1\\
(1-2\delta)2^{-2n}x + (i\cdot 2^{-n}, (k+\frac12+\delta)\cdot 2^{-2n}), & \text{if }i = 2.
\end{cases}
\]
Given a set $F\subset \mathbb R^2$, let 
\[
S_n(F) = \bigcup_{i=0}^2 \bigcup_{k=0}^{2^{2n}-2} f_{n,i,k}(F)
\]
and define a sequence of compact sets $E_7 \supset E_8 \supset \cdots$ as
\[
E_n = S_7\circ S_8 \circ \cdots \circ S_n([0,1]^2).
\]
The final compact set is defined as
\[
 E = \bigcap_{n=7}^\infty E_n.
\]
Let us then show that the set $E$ is Lipschitz-thin, but not intervally thin.

In order to see that $E$ is not intervally thin we prove  
\begin{quote}
\textbf{Claim 1}:
any line passing through  $\{0\} \times [2\delta,1-2\delta]$
 making an angle at most $\pi/4$ with the horizontal axis, must intersect the set $E$.
 \end{quote}
Assuming Claim 1, any line segment connecting a point in $B((-1/2,1/2),1/4)$ to a point in $B((3/2,1/2),1/4)$ intersects $E$ and thus $E$ is not intervally thin.

To prove that $E$ satisfies Claim~1, it is enough to show that for any set $F \subset [0,1]^2$ satisfying Claim~1 and for any $n \ge 7$ also the set $S_n(F)$
 satisfies Claim~1. Indeed, assuming this is true, then since $[0,1]^2$ satisfies Claim~1, so does each $E_n$, and the property carries over to the nested intersection $E$.
 
 For showing that $S_n(F)$ satisfies Claim 1, let $L$ be a line passing through $\{0\} \times [2\delta,1-2\delta]$ and
 making an angle at most $\pi/4$ with the horizontal axis. Suppose towards a contradiction that $S_n(F) \cap L = \emptyset$.
 Denote for $i \in \{0,1,2\}$ by $y_i \in [\delta, 1-\delta]$ the $y$-coordinate of the intersection of $L$ with the vertical line $\{i\cdot 2^{-n}\} \times \mathbb R$. By the facts that $n \ge 7$, $L$ passes through $\{0\} \times [2\delta,1-2\delta]$ and
 makes an angle of at most $\pi/4$ with the horizontal axis, we have that $L$ intersects $\{2^{-n}\} \times [\delta,1-\delta]$ and $\{2^{-n+1}\} \times [\delta,1-\delta]$.
This together with the assumption $S_n(F) \cap L = \emptyset$ and the definition of $S_n$ implies that there exist $k_0,k_1, k_2 \in \mathbb N$ such that
 \begin{equation}\label{eqfirst2}
  |y_i - k_i2^{-2n}| \le 3\delta\cdot 2^{-2n}, \qquad \text{for }i = 0,1
 \end{equation}
 and
 \begin{equation}\label{eqlast}
 \left|y_2 - \left(k_2 + \frac12\right)2^{-2n}\right| \le 3\delta \cdot 2^{-2n}.
 \end{equation}
 Since $L$ is a line, $y_2 =  2y_1 - y_0 $. Combining this with~\eqref{eqfirst2} gives the estimate
 \begin{align*}
  |y_2 - (2k_1 - k_0)2^{-2n}| & = |2y_1-y_0 - (2k_1 - k_0)2^{-2n}|\\
  & \le 2|y_0 - k_02^{-2n}| + |y_1 - k_12^{-2n}|\\
  & \le 9\delta\cdot 2^{-2n}.
 \end{align*}
 Since $\delta = 2^{-5}$, this contradicts \eqref{eqlast}. Thus $S_n(F) \cap L \ne \emptyset$ and Claim 1 holds.
 
 It remains to show that $E$ is Lipschitz-thin. Fix $\epsilon>0$. Observe that 
 \[
 S_n([0,1]^2) \subset [0,2^{-n+2}]\times[0,1]
 \]
which implies that $S_{n-1}\circ S_n([0,1]^2)$ is contained in vertical strips of width $2^{-3n+4}$ separated by horizontal distances at least $2^{-n}$. Furthermore, each vertical strip has holes of height at least $\delta\cdot 2^{-2n+2}$ placed uniformly at vertical distance less than $2^{-2n+2}$ from one another. These holes allow curves to pass through the vertical strips with only a slight change of direction. Therefore, there exists $n\in\mathbb N$ such that any two points $x,y \in \mathbb R^2$ with distance at least $\epsilon$ from $S_{n-1}\circ S_n([0,1]^2)$ can be connected by an $\epsilon$-Lipschitz graph avoiding $S_{n-1}\circ S_n([0,1]^2)$. 

Subsequent application of $S_{n-2},\dots, S_7$ only replicates the above at smaller scales, since the property of being an $\epsilon$-Lipschitz graph is preserved under similitudes. Therefore, any two points $x,y \in \mathbb R^2$ with distance at least $\epsilon$ from $E_n$ can be connected by an $\epsilon$-Lipschitz graph avoiding $E_n$. Consequently, $E$ is Lipschitz-thin. \qed
\end{example}

It is immediate that a Lipschitz-thin set is metrically negligible. The converse is not true, as the following example, called ``Holey Devil's Staircase'' in~\cite{PokornyRmoutil}, shows.

\begin{example} Let $C\subset [0,1]$ be the standard middle-third Cantor set, and let $f\colon [0,1]\to [0,1]$ be the associated ``staircase'' function, i.e., the continuous function that is constant on each component of $[0,1]\setminus C$, where it is equal to the midpoint of the component.  Let $E=\{(x,f(x)) \colon x\in C\}$ be the part of the graph of $f$ that lies over $C$. This is a compact totally disconnected set which is metrically removable but not Lipschitz-thin.
\end{example}

\begin{proof} Since $E$ is a subset of the graph of an increasing function, its $\mathcal H^1$ measure is finite. By  Theorem~\ref{metricallyremovable} $E$ is metrically removable.

Suppose that $g\colon [0,1]\to [0,1]$ is a strictly increasing function such that $g(0)>0$ and $g(1)<1$. We claim that the graph of $g$ meets $E$. Indeed, let $x_0=\inf\{x\colon g(x)=f(x)\}$; this infimum is defined because $g(0) > f(0)$ and $g(1) < f(1)$.  If $x_0\notin C$ then consider $x_1<x_0$ such that $f(x_1)=f(x_0)$. Since $g(x_1)<g(x_0)=f(x_0)=f(x_1)$, the intermediate value theorem implies that $g=f$ at some point of $(0, x_1)$, contradicting the choice of $x_0$.  

It remains to observe that for $\epsilon< \tan^{-1}(1/3)$, any $\epsilon$-Lipschitz graph connecting the points $(0, 1/3)$ and $(1, 2/3)$ is the graph of a strictly increasing function $g$ to which the previous paragraph applies.  
\end{proof}

\section{Extension of delta-monotone maps}\label{secExtension}

The extension theorem of this section will be applied to holomorphic functions in \S\ref{secargremovable}. 

\begin{definition}\label{defDM}\cite{Kovalev} Let $\Omega\subset \mathbb R^n$, $n\ge 2$, and $\delta>0$. A map $f\colon \Omega\to\mathbb R^n$ is called \emph{$\delta$-monotone} if 
\begin{equation}\label{defDM1}
\langle f(x)-f(y), x-y \rangle  \ge \delta  |f(x)-f(y)| |x-y|
\end{equation}
for all $x,y\in \Omega$.  
\end{definition}

Examples of $\delta$-monotone maps are easy to find when $n=2$, by taking $\Omega$ to be a convex domain and $f$ a holomorphic function such that $|\arg f'|\le \cos^{-1}\delta$. 
For example~\cite[Example 15]{Kovalev}, the function $f(z)=-1/z$ is $\delta$-monotone  in the domain $\Omega = \{z\in\mathbb C\colon |\arg z|<\pi/6\}$, with $\delta=1/2$. Observe that this function does not have a continuous extension to $\overline{\Omega}$, being unbounded near $0$. The following theorem shows this is the only obstruction to continuous extension. 

\begin{definition}\label{defconnectedatboundary} An open set $\Omega\subset\mathbb R^n$ is \emph{locally connected on the boundary} if for every $b\in\partial \Omega$ and every $r>0$ there exists an open set $U$ such that $b\in U\subset B(b,r)$ and $U\cap \Omega$ is connected. 
\end{definition}

\begin{theorem}\label{extendDM} Suppose $\Omega\subset \mathbb R^n$, $n\ge 2$,  is open and locally connected on the boundary. Let $f\colon \Omega\to\mathbb R^n$ be a $\delta$-monotone map that is bounded on bounded subsets of $\Omega$. 
Then $f$ has a continuous extension to $\overline{\Omega}$, which is also $\delta$-monotone.
\end{theorem}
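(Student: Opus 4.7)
The plan is to establish, for each $b\in\partial\Omega$, the existence of a limit of $f(x)$ as $x\to b$ within $\Omega$, and then to define the extension at $b$ by this limit. Continuity and $\delta$-monotonicity of the extension then follow by passing to the limit in~\eqref{defDM1} using sequences in $\Omega$.

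For the construction of the boundary values, I would fix $b\in\partial\Omega$ and use the local-connectedness hypothesis to choose, for each small $r>0$, an open set $U_r$ with $b\in U_r\subset B(b,r)$ such that $V_r := U_r\cap\Omega$ is connected. Since $V_r$ is bounded and $f$ is bounded on bounded subsets of $\Omega$, $f(V_r)$ is bounded. The goal is then the oscillation estimate
\[
\diam f(V_r) \longrightarrow 0 \quad \text{as } r\to 0.
\]
Once this is established, the Cauchy property of $f(x)$ as $x\to b$ is immediate: any sequence $x_k\to b$ in $\Omega$ eventually lies in the connected set $V_r$, so $|f(x_k) - f(x_\ell)| \leq \diam f(V_r)\to 0$, and the limit value is independent of the chosen sequence.

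To prove the oscillation estimate I would argue by contradiction. Assuming $|f(x_k)-f(y_k)| \geq 2\epsilon_0$ for some sequences $x_k,y_k\to b$, pass to subsequences so that $f(x_k)\to p$ and $f(y_k)\to q$ with $|p-q|\geq 2\epsilon_0$, and $(x_k-y_k)/|x_k-y_k|\to v$. Taking limits in~\eqref{defDM1} applied to the pair $(x_k,y_k)$ yields $\langle p-q,v\rangle \geq \delta|p-q|$; taking limits in~\eqref{defDM1} applied to $(x_k,z)$ and $(y_k,z)$ for an arbitrary fixed $z\in\Omega$ yields the cone conditions that both $f(z)-p$ and $f(z)-q$ make angle at most $\arccos\delta$ with the direction $(z-b)/|z-b|$.

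The main obstacle is converting these cone constraints into the contradiction $p=q$. The route I would attempt is to exploit the connectedness of $V_{r_k}$ via a path $\gamma_k\subset V_{r_k}$ joining $x_k$ to $y_k$: by the continuity of $f$ the image $f\circ\gamma_k$ joins $f(x_k)$ to $f(y_k)$, so it must pass through every intermediate value, and for any $s$ strictly between $p$ and $q$ one can extract $w_k\in\gamma_k$ with $f(w_k)\to s$. Iterating this bisection produces a whole continuum of boundary limit values at $b$, each subject to the same cone constraints indexed by $z\in\Omega$; the contradiction would then have to come from showing that the collective constraints rule out any non-degenerate continuum of limits. Once the oscillation estimate is secured, continuity of the extension on $\overline\Omega$ is immediate (interior continuity of $f$ follows from $\delta$-monotonicity together with local boundedness, and boundary continuity is precisely the oscillation estimate), and $\delta$-monotonicity of the extension is obtained by approximating arbitrary $x,y\in\overline\Omega$ by interior sequences and passing to the limit in~\eqref{defDM1}.
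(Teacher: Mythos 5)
Your framework matches the paper's: nested connected neighborhoods $V_r = U_r\cap\Omega$, reduction to showing $\diam f(V_r)\to 0$, passing to subsequences to extract directions, and the recognition that the cone constraints coming from $\delta$-monotonicity must ultimately collapse the set of limit values. You also correctly observe that connectedness forces the set of limit values to be a continuum. However, the step you flag with ``the contradiction would then have to come from showing that the collective constraints rule out any non-degenerate continuum of limits'' is precisely the crux of the argument, and the proposal leaves it unresolved. That is a genuine gap, not a routine detail.

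The paper fills it as follows. Let $E = \bigcap_k \overline{f(U_k\cap\Omega)}$; this is nonempty, compact, and connected. For each $y\in E$, choose $x_j\to b$ with $f(x_j)\to y$ and (after passing to a subsequence) $(b-x_j)/|b-x_j|\to u$. Now apply $\delta$-monotonicity to pairs $(x_j, x)$ with \emph{both} points near $b$: for $r$ small enough, $B(b,r)\cap\Omega \subset C(x_j, b-x_j, \alpha-\cos^{-1}\delta)$, and~\eqref{imageofcone} gives $f(B(b,r)\cap\Omega)\subset C(f(x_j), b-x_j, \alpha)$, hence $E\subset C(f(x_j), b-x_j, \alpha)$. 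Letting $j\to\infty$ yields $E\subset C(y,u,\alpha)$ with $\alpha<\pi/2$. Thus \emph{every} point of $E$ is a vertex of $E$. The key auxiliary fact (Lemma~\ref{nonvertex}) is that a set all of whose points are vertices must be countable, because the interiors of the dual cones at distinct vertices are pairwise disjoint nonempty open sets. A countable connected nonempty compact set is a singleton, so $E=\{y\}$ and the limit exists. Your cone constraints of the form ``$f(z)-p$ makes angle $\le\arccos\delta$ with $(z-b)/|z-b|$'' involve a fixed distant $z$ and constrain $f(z)$ rather than $E$ itself; they do not straightforwardly yield the self-referential cone condition on $E$ that the countability lemma needs. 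You would need to redirect the $\delta$-monotonicity estimate to pairs of points both tending to $b$, and then you still need the countability-of-vertices observation, which is the genuinely new idea.
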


We need additional notation for the proof. Given a point $p\in \mathbb R^n$, a nonzero vector $v\in \mathbb R^n$, and an angle $\theta\in (0,\pi/2)$, let
\[
C(p,v,\theta) = \{x\in \mathbb R^n\colon 
\langle x-p, v\rangle \ge \cos\theta |x-p||v|\}
\]
be the closed cone with vertex $p$, the axis parallel to $v$, and opening angle $\theta$. Note that if $f\colon \Omega\to\mathbb R^n$ is a $\delta$-monotone map and $\alpha:=\theta+\cos^{-1}\delta <\pi/2$, then 
\begin{equation}\label{imageofcone}
f(C(p,v,\theta)\cap \Omega) \subset C(f(p), v, \alpha)
\end{equation}
for any $p\in \Omega$. 

Let us say that $p$ is a \textit{vertex} of a set $E\subset\mathbb R^n$ if $p\in E$ and there exist $v\ne 0$ and $\theta\in (0,\pi/2)$ such that  $E\subset C(p,v,\theta)$.  

\begin{lemma}\label{nonvertex} For any set $E\subset \mathbb R^n$ the set of vertices of $E$ is countable. 
\end{lemma}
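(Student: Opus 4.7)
The plan is to exploit the following antisymmetric observation. Suppose $p,q\in E$ are two distinct points both serving as vertices of $E$ with the \emph{same} cone parameters $(v,\theta)$, in the sense that $E\subset C(p,v,\theta)\cap C(q,v,\theta)$. Then $q\in E\subset C(p,v,\theta)$ gives
\[
\langle q-p,v\rangle \ge \cos\theta\,|q-p||v| > 0,
\]
the strict inequality coming from $\theta\in(0,\pi/2)$, $q\ne p$, and $v\ne 0$. Symmetrically, $p\in E\subset C(q,v,\theta)$ yields $\langle p-q,v\rangle>0$. Adding these two inequalities produces $0>0$, a contradiction. Hence for each fixed pair $(v,\theta)$, at most one point of $E$ is a vertex realized by those parameters.

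The next step is to reduce the parameter space $\{(v,\theta)\colon v\ne 0,\ \theta\in(0,\pi/2)\}$ to a countable subfamily. Pick a countable dense set $D\subset S^{n-1}$ and let $T=\mathbb Q\cap(0,\pi/2)$. Given any vertex $p$ realized by $(v,\theta)$ with $\theta<\pi/2$, choose $\theta'\in T$ with $\theta<\theta'<\pi/2$, and then choose $v'\in D$ whose angular distance from $v/|v|$ is less than $\theta'-\theta$. A triangle-inequality argument on $S^{n-1}$ gives $C(p,v,\theta)\subset C(p,v',\theta')$, so $E\subset C(p,v',\theta')$ as well. Thus $p$ is already witnessed as a vertex by a pair $(v',\theta')\in D\times T$ from the countable family.

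Combining the two steps, the set of vertices of $E$ is contained in the union, indexed by the countable set $D\times T$, of singletons (or empty sets), and is therefore countable. The substantive content lies in the first step; the second is a routine density argument. There is essentially no obstacle beyond noticing that the strict positivity of $\cos\theta$ (guaranteed by $\theta<\pi/2$) makes the two cone conditions strictly incompatible for distinct vertices sharing a direction.
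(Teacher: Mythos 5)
Your proof is correct, but it takes a genuinely different route from the paper's. You argue in two steps: first, that a fixed pair of cone parameters $(v,\theta)$ can witness at most one vertex (by the strict positivity of $\cos\theta$ and antisymmetry of $\langle\,\cdot\,,v\rangle$), and second, that every vertex is already witnessed by a pair drawn from a fixed countable family, via a density argument enlarging $\theta$ slightly and perturbing $v$ into a countable dense subset of $S^{n-1}$. The paper instead associates to each vertex $p$ (with cone $C(p,v,\theta)$) the open dual cone $C(0,v,\pi/2-\theta)^\circ$, observes that any $y$ in this dual cone makes $x\mapsto\langle x,y\rangle$ attain its minimum over $E$ uniquely at $p$, deduces that the dual cones of distinct vertices are pairwise disjoint nonempty open subsets of $\mathbb R^n$, and concludes by separability. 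Both arguments ultimately rest on separability of Euclidean space; your version replaces the dual-cone construction with an explicit parameter-space reduction, which is more elementary and self-contained but slightly longer, while the paper's version is slicker and packages the conflict between two vertices into the single geometric fact that disjoint open sets come in countable families. Your triangle-inequality step ($C(p,v,\theta)\subset C(p,v',\theta')$ when the angular distance between $v$ and $v'$ is less than $\theta'-\theta$) is correct and is the only place requiring care.
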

\begin{proof} Suppose $p\in E\subset C(p,v,\theta)$. Then for every vector $y$ in the interior of the dual cone $C(0, v, \pi/2-\theta)$ the linear function $x\mapsto \langle x, y\rangle$  attains its minimum on $E$ at the point $p$ and nowhere else. Therefore, the dual cones associated with distinct vertices of $E$ are disjoint. Since there can be only countably many disjoint open subsets of $\mathbb R^n$, the lemma is proved.   
\end{proof}

\begin{proof}[Proof of Theorem~\ref{extendDM}]
Fix $b\in \partial \Omega$. For each $k\in \mathbb N$ let $U_k$ be an open subset of $\mathbb R^n$ such that $b\in U_k\subset B(b,1/k)$ and $U_k\cap \Omega$ is connected. Define 
\[
E = \bigcap_{k=1}^\infty E_k,\quad\text{where } \ E_k = \overline{f(U_k\cap\Omega)}.  
\]
Then each set $E_k$ is nonempty, compact and connected. The intersection of a nested sequence of such sets is nonempty, compact, and connected as well~\cite[Theorem~28.2]{Willard}.

Choose $\alpha$ strictly between $\cos^{-1}\delta$ and $\pi/2$. 
Fix $y\in E$ and pick a sequence $x_j\to b$ such that $f(x_j)\to y$. Passing to a subsequence, we may assume 
\begin{equation}\label{directionslimit}
\frac{b-x_j}{|b-x_j|}\to u\quad \text{as } n\to\infty,
\end{equation}
where $u$ is some unit vector. For a fixed $j$, we have 
\[
B(b,r) \subset C(x_j, b-x_j, \alpha-\cos^{-1}\delta)
\]
when $r>0$ is small enough. By ~\eqref{imageofcone} this implies
$f(B(b,r)) \subset  C(f(x_j), b-x_j, \alpha)$, hence $E\subset C(f(x_j), b-x_j, \alpha)$. 

Passing to the limit $j\to\infty$ and using~\eqref{directionslimit}, we obtain $E\subset C(y, u, \alpha)$. Thus, every point of $E$ is a vertex. 
By Lemma~\ref{nonvertex} the set $E$ is countable. Being also nonempty and connected, $E$ must consist of precisely one point, say $E=\{y\}$. This implies $\lim_{x\to b}f(x) =  y$, which provides the desired continuous extension of $f$ to the boundary. 
Finally, the extended map is $\delta$-monotone because the inequality~\eqref{defDM1} is preserved under taking limits.
\end{proof}

\begin{corollary} \label{extendDMdense} 
Suppose $\Omega\subset \mathbb R^n$, $n\ge 2$, is open, dense in $\mathbb R^n$, and locally connected on the boundary. Then every $\delta$-monotone map $f\colon \Omega\to\mathbb R^n$  has a continuous $\delta$-monotone extension to $\mathbb R^n$. 
\end{corollary}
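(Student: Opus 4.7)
The plan is to reduce Corollary~\ref{extendDMdense} to Theorem~\ref{extendDM}. Since $\Omega$ is dense in $\mathbb R^n$ we have $\overline{\Omega}=\mathbb R^n$, so whatever continuous $\delta$-monotone extension Theorem~\ref{extendDM} produces will automatically live on all of $\mathbb R^n$. The only hypothesis of that theorem not directly given in the corollary is that $f$ is bounded on bounded subsets of $\Omega$. So the corollary reduces to establishing this local boundedness; this is where density of $\Omega$ enters in an essential way.

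To establish local boundedness I would argue by contradiction. Suppose there is a bounded sequence $\{x_j\}\subset\Omega$ with $|f(x_j)|\to\infty$. Passing to subsequences, assume $x_j\to b$ and $f(x_j)/|f(x_j)|\to w$ for some point $b\in\mathbb R^n$ and unit vector $w\in\mathbb R^n$. The strategy is to pick two test points in $\Omega$ located on opposite sides of $b$, and use $\delta$-monotonicity against each to pin down $w$ in geometrically incompatible ways. Fix any unit vector $u$ and use density of $\Omega$ to pick $p,q\in\Omega$ with $p$ close to $b+u$ and $q$ close to $b-u$; how close will be determined below.

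The $\delta$-monotonicity inequalities applied to the pairs $(x_j,p)$ and $(x_j,q)$, after dividing by the corresponding norms on the right, read
\[
\langle W_j, V_j\rangle\ge\delta, \qquad \langle W_j', V_j'\rangle\ge\delta,
\]
where $W_j=(f(x_j)-f(p))/|f(x_j)-f(p)|$, $V_j=(x_j-p)/|x_j-p|$, and the primed quantities are the analogs for $q$. Because $|f(x_j)|\to\infty$ while $f(p)$ and $f(q)$ are fixed finite values, a direct computation shows $W_j,W_j'\to w$; meanwhile $V_j\to(b-p)/|b-p|$ and $V_j'\to(b-q)/|b-q|$, each of which can be placed within distance $\eta$ of $-u$ and $u$ respectively by choosing $p,q$ sufficiently close to $b+u, b-u$. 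Passing to the limit yields $\langle w,-u\rangle\ge\delta-\eta$ and $\langle w,u\rangle\ge\delta-\eta$; summing these collapses the left side to $0$ and gives $0\ge 2(\delta-\eta)$, a contradiction as soon as $\eta<\delta$.

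The main obstacle is really just bookkeeping: one must quantify how closeness of $p$ to $b+u$ controls the error $|(b-p)/|b-p|+u|$, so that $\eta$ can indeed be taken smaller than $\delta$. This is a routine first-order computation. Once local boundedness is in hand, Theorem~\ref{extendDM} delivers the continuous $\delta$-monotone extension to $\overline{\Omega}=\mathbb R^n$, completing the proof.
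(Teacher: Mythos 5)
Your proof is correct, and it reduces the corollary to Theorem~\ref{extendDM} by establishing local boundedness, exactly as the paper does. The execution, however, differs. The paper gives a direct argument: for large $R$, pick $x\in\Omega$ with $-x\in\Omega$ and $|x|\ge R$ (possible by density), note that $B(0,r)\subset C(x,-x,\alpha-\cos^{-1}\delta)\cap C(-x,x,\alpha-\cos^{-1}\delta)$, and apply the cone-image property~\eqref{imageofcone} to conclude that $f(B(0,r)\cap\Omega)$ sits inside the intersection $C(f(x),-x,\alpha)\cap C(f(-x),x,\alpha)$ of two cones with opposite axes and opening angle $<\pi/2$, which is bounded. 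You instead argue by contradiction and compactness: pass to a blow-up sequence $x_j\to b$ with $f(x_j)/|f(x_j)|\to w$, place two test points $p,q\in\Omega$ close to $b+u$ and $b-u$, and read off from $\delta$-monotonicity the incompatible constraints $\langle w,\pm u\rangle\ge\delta-\eta$. Both proofs rest on the same geometric observation, namely that $\delta$-monotonicity against two anchors in roughly opposite directions confines the image; the paper uses far-away anchors to bound the image of a whole ball at once, whereas you use near anchors at the accumulation point of a hypothetical blow-up. The paper's version is a bit more economical and directly quantitative, but your limiting argument is perfectly valid and perhaps more transparent about where density of $\Omega$ enters.
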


\begin{proof} In view of Theorem~\ref{extendDM} we only need to prove that $f(B(0,r)\cap \Omega)$ is bounded for every $r>0$. Choose $\alpha$ strictly between $\cos^{-1}\delta$ and $\pi/2$. When $R$ is sufficiently large, we have 
\[
B(0,r)\subset C(x, -x, \alpha-\cos^{-1}\delta)
\]
for all $x\in\Omega$ with $|x|\ge R$. Since $\Omega$ is open and dense, there is $x\in \Omega$ such that $-x\in\Omega$ and $|x|\ge R$. From~\eqref{imageofcone} it follows that 
\[
f(B(0,r)\cap\Omega) \subset C(f(x), -x, \alpha) \cap C(f(-x), x, \alpha) 
\]
where the set on the right is bounded, proving the claim. 
\end{proof}

The relevance of the $\delta$-monotonicity condition to the extension theorem~\ref{extendDM} is emphasized by the following example. 

\begin{example} Let $\Omega = \mathbb R^n\setminus \{0\}$ where $n\ge 2$. Define $f\colon \Omega\to\mathbb R^n$ by $f(x) = x + x/|x|$. An interested reader can check that $\langle f(x)-f(y), x-y\rangle >0$ for all pairs of distinct points $x,y\in\Omega$. Yet, $f$ does not have a continuous extension to $0$. It narrowly fails the $\delta$-monotonicity condition~\eqref{defDM1}. 
\end{example}

\begin{remark}\label{domainassumption} Every quasiconvex domain $\Omega$ is locally connected on 
the boundary. In particular, when $E\subset \mathbb R^n$ is closed and metrically removable, its complement $\Omega = E^c$ satisfies the assumptions of Corollary~\ref{extendDMdense}.
\end{remark}

Indeed, given $b\in \partial \Omega$ and $r>0$,  one can use quasiconvexity to find sufficiently small $\rho<r/2$ so that any two points of $B(b,\rho)\cap \Omega$ are connected by a curve of length less than $r/2$. Such a curve must lie within $B(b,r)$. Therefore, $B(b,\rho)\cap \Omega$ belongs to one connected component of $B(b,r)\cap \Omega$, which satisfies Definition~\ref{defconnectedatboundary}.  

\section{Removable sets for functions with restricted argument of derivative}\label{secargremovable}

\begin{theorem}\label{argremovable} Let $K\subset \mathbb C$ be a closed Lipschitz-thin set with $\mathcal H^2(K)=0$. Suppose $f\colon K^c\to \mathbb C$ is holomorphic and there exists $\alpha<\pi/2$ such that
\begin{equation}\label{argrestriction}
|\arg f'(z)| \le \alpha,\quad z\in K^c
\end{equation}
(in particular, $f'\ne 0$.) Then $f$ extends to an entire function, which is in fact linear.
\end{theorem}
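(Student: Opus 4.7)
The idea is to convert the pointwise bound $|\arg f'|\le\alpha$ into a global $\delta$-monotonicity of $f$ using Lipschitz-thin connecting paths, extend $f$ to all of $\mathbb C$ via the tools of the previous section, remove the zero-area set $K$ by exploiting the quasiregularity built into planar $\delta$-monotone maps, and finally invoke Liouville's theorem to pass from a cone constraint on $\tilde f'$ to linearity.

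\emph{Step 1: $\delta$-monotonicity on $K^c$ with $\delta=\cos\alpha$.} Given distinct $a,b\in K^c$ and $\epsilon>0$, Lemma~\ref{equivalentLipthin} supplies an $\epsilon$-Lipschitz graph $\gamma\subset K^c$ from $a$ to $b$. Projecting onto the direction $(b-a)/|b-a|$ shows that $\gamma$ is rectifiable with length at most $\sec\epsilon\,|b-a|$. By Definition~\ref{lipgraph} every chord $\gamma(t_{j+1})-\gamma(t_j)$ of a subdivision has argument within $\epsilon$ of $\arg(b-a)$, while $|\arg f'|\le\alpha$; hence each Riemann-sum contribution $f'(\zeta_j)(\gamma(t_{j+1})-\gamma(t_j))$ lies in the closed convex cone of half-angle $\alpha+\epsilon<\pi/2$ around $b-a$. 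Since this cone is closed under addition, $f(b)-f(a)=\int_\gamma f'\,dz$ lies in it as well, and letting $\epsilon\to 0$ gives
\[
\langle f(b)-f(a),\,b-a\rangle\ \ge\ \cos\alpha\,|f(b)-f(a)|\,|b-a|.
\]

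\emph{Step 2: Extension of $f$ to $\mathbb C$.} Lipschitz-thinness makes $K$ have empty interior (Lemma~\ref{equivalentLipthin}) and also metrically removable (the $\sec\epsilon$-length bound above shows $\rho_{K^c}=|\,\cdot\,|$ on $K^c$), so by Remark~\ref{domainassumption} the complement $K^c$ is open, dense, and locally connected on the boundary. Corollary~\ref{extendDMdense} then produces a continuous $\delta$-monotone extension $\tilde f\colon\mathbb C\to\mathbb C$. By the regularity theory for planar $\delta$-monotone maps~\cite{Kovalev}, $\tilde f\in W^{1,2}_{\mathrm{loc}}(\mathbb C)$ and satisfies a Beltrami equation $\tilde f_{\bar z}=\mu\,\tilde f_z$ with $\|\mu\|_\infty<1$ depending only on $\delta$. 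Because $\mathcal H^2(K)=0$, the open set $K^c$ has full planar measure; holomorphicity of $\tilde f$ there forces $\tilde f_{\bar z}=0$ almost everywhere, and Weyl's lemma (equivalently, Morera via Stokes' theorem on $W^{1,1}$) then yields that $\tilde f$ is entire.

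\emph{Step 3: Linearity.} The argument bound gives $\mathrm{Re}\,\tilde f'(z)\ge\cos\alpha\,|\tilde f'(z)|\ge 0$ on the dense set $K^c$, and continuity of $\tilde f'$ propagates this to all of $\mathbb C$. Hence $e^{-\tilde f'}$ is an entire function of modulus at most $1$, so by Liouville it is constant; $\tilde f'$ is therefore constant and $\tilde f(z)=cz+d$. The main obstacle is the removability step: a closed set of zero area is not in general removable for merely continuous functions, so the proof relies crucially on the Sobolev regularity that $\delta$-monotonicity provides, and in turn on Lipschitz-thinness to upgrade the hypothesis $|\arg f'|\le\alpha$ to genuine $\delta$-monotonicity on $K^c$.
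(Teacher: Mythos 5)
Your proposal is correct and follows essentially the same route as the paper: Lipschitz-thinness upgrades the argument bound to $\delta$-monotonicity (this is precisely the paper's Lemma~\ref{thin2DM}), and Corollary~\ref{extendDMdense} together with the quasiconformality of planar $\delta$-monotone maps yields a $W^{1,2}_{\mathrm{loc}}$ extension whose Beltrami coefficient vanishes a.e.\ because $\mathcal H^2(K)=0$. The only minor divergence is the endgame: the paper invokes the uniqueness theorem for the Beltrami equation to conclude $F$ is linear in one stroke, while you first pass through Weyl's lemma to obtain an entire function and then apply Liouville to $e^{-\tilde f'}$ using the cone condition again; both finishes are standard and equally valid.
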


The first step toward the proof of Theorem~\ref{argremovable}, presented as a lemma below, does not rely on $K$ having zero measure. 

\begin{lemma}\label{thin2DM} Let $K\subset \mathbb C$ be a closed Lipschitz-thin set. Suppose $f\colon K^c\to \mathbb C$ is holomorphic and satisfies~\eqref{argrestriction} with $\alpha<\pi/2$. Then $f$ is $\delta$-monotone with $\delta=\cos \alpha$.
\end{lemma}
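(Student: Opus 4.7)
My plan is to derive the $\delta$-monotonicity inequality by integrating $f'$ along a nearly straight curve in $K^c$ joining $w$ to $z$, then exploiting the pointwise argument restriction on $f'$ together with the chord-wise direction bound provided by the Lipschitz-graph structure.

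Fix distinct $z,w\in K^c$ and $\epsilon>0$ small enough that $\alpha+\epsilon<\pi/2$, and set $\theta_0=\arg(z-w)$. Since $K$ is closed and Lipschitz-thin, Lemma~\ref{equivalentLipthin} supplies an $\epsilon$-Lipschitz graph $\gamma\colon[0,L]\to K^c$ from $w$ to $z$. The chord-wise condition of Definition~\ref{lipgraph} forces the orthogonal projection of $\gamma$ onto the line through $w$ and $z$ to be strictly monotone, so $\gamma$ is in fact a Lipschitz graph (of Lipschitz constant $\tan\epsilon$) over a subsegment of that line. In particular $\gamma$ is rectifiable; parameterizing by arclength we have $|\gamma'(t)|=1$ a.e., and applying the chord condition to $s\downarrow t$ yields $|\arg\gamma'(t)-\theta_0|\le\epsilon$ at a.e.\ point of differentiability.

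Because $\gamma([0,L])$ is compact in the open set $K^c$, $f$ is holomorphic on a neighborhood of it and
\begin{equation*}
f(z)-f(w)=\int_0^L f'(\gamma(t))\,\gamma'(t)\,dt.
\end{equation*}
The hypothesis $|\arg f'(\gamma(t))|\le\alpha$, combined with the bound on $\arg\gamma'(t)$, shows that the integrand has argument within $\alpha+\epsilon<\pi/2$ of $\theta_0$ almost everywhere. Hence
\begin{equation*}
\re\!\bigl[(f(z)-f(w))\,e^{-i\theta_0}\bigr]\;\ge\;\cos(\alpha+\epsilon)\int_0^L|f'(\gamma(t))|\,dt\;\ge\;\cos(\alpha+\epsilon)\,|f(z)-f(w)|,
\end{equation*}
where the last inequality uses the triangle inequality for the integral. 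Letting $\epsilon\downarrow 0$ and multiplying by $|z-w|$ gives $\langle f(z)-f(w),\,z-w\rangle\ge\cos\alpha\cdot|f(z)-f(w)|\cdot|z-w|$, which is exactly~\eqref{defDM1} with $\delta=\cos\alpha$; the case $z=w$ is trivial.

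The one slightly delicate step is passing from the chord bound in Definition~\ref{lipgraph} to an almost-everywhere pointwise bound on $\arg\gamma'(t)$, which I would handle by identifying $\gamma$ as a genuine Lipschitz graph over the chord line and invoking Rademacher's theorem. Beyond that, the proof relies on the standard fact that integrating a function whose values lie in a closed wedge of opening less than $\pi$ yields a result in the same wedge, together with the ability to approximate an endpoint-to-endpoint connection in $K^c$ by arbitrarily straight curves.
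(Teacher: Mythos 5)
Your argument is correct and follows essentially the same route as the paper's: pick an $\epsilon$-Lipschitz graph in $K^c$ from $w$ to $z$ via Lemma~\ref{equivalentLipthin}, observe that arclength parameterization gives $|\arg(\gamma'(t)/(z-w))|\le\epsilon$ a.e., integrate $f'\cdot\gamma'$ along $\gamma$, use that the integrand stays in a wedge of half-angle $\alpha+\epsilon<\pi/2$ about the direction of $z-w$, and let $\epsilon\to 0$. Your write-up merely spells out the rectifiability of $\gamma$ and the wedge-integration step in more detail than the paper does.
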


\begin{proof} Fix distinct $z,w\in K^c$. Pick $\epsilon < \pi/2-\alpha $ and let $\gamma$ be an $\epsilon$-Lipschitz graph connecting $w$ to $z$ within $K^c$. When parameterized by its arclength, $\gamma$ satisfies
\[
\left|\arg \frac{\gamma'(t)}{z-w}\right| \le \epsilon 
\]
for almost all $t$ in its parameter interval. Using the inequality $|\arg f'|<\alpha$ we obtain 
\[
\left|\arg \frac{(f\circ \gamma)'}{z-w}\right| < \alpha+\epsilon. 
\]
Since $f\circ \gamma $ is absolutely continuous, integration yields 
\[
\left|\arg \frac{f(z)-f(w)}{z-w}\right| < \alpha+\epsilon 
\]
which implies~\eqref{defDM1} with $\delta = \cos(\alpha+\epsilon)$. Since $\epsilon$ can be arbitrarily small, the lemma is proved.  
\end{proof}

\begin{proof}[Proof of Theorem~\ref{argremovable}] By Lemma~\ref{thin2DM}, the map $f$ is $\delta$-monotone with $\delta=\cos \alpha$. Corollary~\ref{extendDMdense} with Remark~\ref{domainassumption} provide its $\delta$-monotone extension $F$ to the entire complex plane. 

A $\delta$-monotone map $F\colon \mathbb C\to\mathbb C$ is quasiconformal~\cite[Theorem~6]{Kovalev}, which means that $F$ is locally in the Sobolev space $W^{1,2}$ and satisfies the Beltrami equation 
\[
\frac{\partial F}{\partial \bar z} = \mu(z)\frac{\partial F}{\partial z}
\]
almost everywhere in $\mathbb C$, with $\mu$ being a measurable complex-valued function such that $\esssup|\mu|<1$. 

Since $F$ is holomorphic on $K^c$, its Beltrami coefficient $\mu$ is zero a.e. The uniqueness theorem for the Beltrami equation (\cite[Theorem V.B.1]{Ahlfors} or~\cite[Theorem 5.3.4]{AIMbook}) implies that such $F$ must be a linear function, as claimed.
\end{proof}

\section{Remarks and questions}\label{secQuestions}

A homeomorphism of $\mathbb R^2$ does not preserve metric removability of sets in general. Indeed, there exists a homeomorphism $g\colon \mathbb R\to\mathbb R$ that maps the standard Cantor set $C$ onto a Cantor-type set $C'$ of positive measure. Let $E=C\times C$ and $f(x,y)=(g(x), g(y))$. Then $f\colon \mathbb R^2\to \mathbb R^2$ is a homeomorphism, the set $E$ is metrically removable by Corollary~\ref{powerRemovable} while $f(E) = C'\times C'$ is not metrically removable by Proposition~\ref{fatCantorproduct}.  Note that the map $f$ in this example is neither Lipschitz nor quasiconformal.  

\begin{question}\label{BLinvariance} Are metrically removable sets preserved by bi-Lipschitz homeomorphisms $f\colon \mathbb R^n\to\mathbb R^n$? Or even by quasiconformal maps? 
\end{question}

The property of having quasiconvex complement is obviously preserved by bi-Lipschitz maps. So, the class of closed metrically removable sets $E\subset \mathbb R^n$ with $\mathcal H^n(E)=0$ is indeed preserved by bi-Lipschitz homeomorphisms, by virtue of Lemma~\ref{quasiconvex2removable}.   

\begin{question} What is the best constant in \eqref{lengthestimateeq}? It seems likely that $\pi/2$ can be improved. The example of $\Omega=\mathbb C\setminus [-1,1]$ with $a,b=\pm \epsilon i$ shows that the constant should be at least $1$. Is the inequality 
\begin{equation}\label{lengthestimateconj}
\rho_\Omega(a,b)\le |a-b| + \mathcal H^1(\partial\Omega)
\end{equation}
true? 
\end{question}

As is observed in \cite[p.~26]{Dudziak}, Proposition~\ref{kappaH} holds in the stronger form $\kappa(K)\le \pi \mathcal H^1_\infty(K)$, that is, with the Hausdorff measure $\mathcal H^1$ is replaced by the Hausdorff content $\mathcal H^1_\infty$. In the Hausdorff content version, the constant $\pi$ cannot be improved because for the unit disk $\overline{\mathbb D}$ we have $\kappa(\overline{\mathbb D})=2\pi$ and $\mathcal H^1_\infty(\overline{\mathbb D}) = \diam \overline{\mathbb D} = 2$. However, we do not know of such an example for Hausdorff measure. 

\begin{question} Can the constant $\pi$ in Proposition~\ref{kappaH} be improved? The best constant cannot be less than $3$ because a modification of Sierpinski gasket described in~\cite[p. 75]{Mattila} has $\mathcal H^1(K)=1$ and $\kappa(K) = 3$. 
\end{question}

\begin{question} Is every intervally thin set Lipschitz-thin? By the results of \S\ref{secThinness} this is true in two dimensions, and for closed sets in all dimensions. 
\end{question}

\bibliographystyle{amsplain} 
\bibliography{references.bib}

\end{document}